\documentclass[preprint,12pt]{elsarticle}

\usepackage{amssymb}
\usepackage{url}
\usepackage{amsthm}
\usepackage{amsmath}

\newtheorem{dfn}{Definition}
\newtheorem{lem}{Lemma}
\newtheorem{obs}{Observation}
\newtheorem{thm}{Theorem}
\newtheorem{prp}{Property}
\newtheorem{pro}{Proposition}
\newtheorem{cor}{Corollary}

\journal{}

\begin{document}

\begin{frontmatter}

\title{The strong and doubly metric dimensions of Johnson and Kneser graphs}

\author{Jozef Kratica \fnref{mi}}
\ead{jkratica@mi.sanu.ac.rs}
\author{Mirjana \v{C}angalovi\'c \fnref{fon}}
\ead{mirjana.cangalovic@alumni.fon.bg.ac.rs}
\author{Vera Kova\v{c}evi\'c-Vuj\v{c}i\'c \fnref{fon}}
\ead{vera.vujcic@alumni.fon.bg.ac.rs}

 \address[mi]{Mathematical Institute, Serbian Academy of Sciences and Arts, Kneza Mihaila 36/III, 11 000 Belgrade, Serbia}
 \address[fon]{Faculty of Organizational Sciences, University of  Belgrade, Jove Ili\'ca 154, 11000 Belgrade, Serbia}

\begin{abstract}
In this paper, the strong and doubly metric dimensions of Johnson and Kneser graphs are considered. 
The exact value of the strong metric dimension of Johnson graph $J_{n,k}$ is obtained using
the well-known results from the literature. The strong metric dimension of Kneser graph
$K_{n,k}$ has been obtained for $n \ge 3k-1$. Finally, it has been shown that 
the doubly metric dimensions of Johnson graph $J_{n,2}$ and Kneser graph $K_{n,2}$
are both $\lceil \frac{2n}{3} \rceil$.  
\end{abstract}

\begin{keyword}
Strong metric dimension, doubly metric dimension, Johnson graphs, Kneser graphs.
\MSC[2010]{05C12,05C69}
\end{keyword}

\end{frontmatter}


\section{Introduction and previous work}

For any vertex $v \in V(G)$ of graph $G=(V(G),E(G))$ its open neighborhood
is the set $N(v) = \{w \in V(G) | vw \in E(G)\}$ and its closed neighborhood is
$N[v] = N(v) \bigcup \{v\}$. The degree of a vertex $v$, denoted by $deg(v)$, 
is defined as the cardinality of $N(v)$.
The maximum degree of $G$ is
$\Delta(G) = max \{deg(v) | v \in V(G)\}$ and its minimum degree is
$\delta(G) = min \{deg(v) | v \in V(G)\}$.
If $\Delta(G)=\delta(G)=r$, we say
that graph $G$ is $r$-regular.
The diameter of $G$ is $Diam(G) = max \{d(v,w) | v,w \in V(G)\}$. 

The concepts of resolving sets and the metric dimension of graph G were introduced 
independently by Slater (1975) in \cite{metd1} and by Harary and Melter (1976) in \cite{metd2}. 
For a simple connected undirected graph $G = (V(G),E(G))$ the distance between vertices $u$ and
$v$, i.e. the length of a shortest $u-v$ path, is denoted by $d(u,v)$. 
Vertex $w$ of graph $G$ is said to resolve vertices $u,v \in  V(G)$ if $d(u,w) \ne d(v,w)$. 
Set $R \subseteq V(G)$ is called a resolving set of $G$ if any pair of distinct vertices of $G$ 
is resolved by some vertex from $R$. The metric dimension of $G$, denoted by $\beta(G)$, 
is the minimum cardinality of its resolving sets, while the metric basis is any resolving set 
with cardinality $\beta(G)$. In \cite{metd3} it is proved that that the problem of computing 
the metric dimension of an arbitrary graph is NP-hard. 

The strong metric dimension of $G$, introduced in 2004 by Sebo and Tannier \cite{smetd1}, 
is a more restricted invariant than $\beta(G)$. A vertex $w$ strongly resolves two vertices 
$u$ and $v$ if $u$ belongs to a shortest $v-w$ path, or if $v$ belongs to a shortest $u-w$ path. 
A vertex set $S$ of $G$ is a strong resolving set of $G$ if every two distinct vertices of $G$ 
are strongly resolved by some vertex of $S$. The strong metric dimension of $G$, 
denoted by $\beta_S(G)$, is the minimum cardinality of its strong resolving sets. 
The strong metric basis of $G$ is any strong resolving set with the cardinality $\beta_S(G)$. 
Note that if a vertex $w$ strongly resolves vertices $u$ and $v$ then $w$ also resolves these vertices. 
Hence, every strong resolving set is a resolving set and $\beta(G) \le \beta_S(G)$. 
The problem of finding $\beta_S(G)$ is proved to be NP-hard \cite{smetd2}. 
A survey of the state-of-the-art (in 2013) on the strong metric dimension can be found in \cite{yuj}. 

In the sequel we will present some properties of the strong metric dimension from literature which use the concepts 
of vertex covering and independence number of graphs. A vertex cover of graph $G$ is a set $C$ 
of vertices from $V(G)$ such that for each edge $uv \in E(G)$ it holds $u \in C$ or $v \in C$.
The vertex covering number of $G$, denoted by $vc(G)$, is the minimum cardinality of its vertex covers. 
On the other hand, the maximum cardinality of a set of vertices of $G$, no two of which are adjacent, is called
the independence number of graph $G$ and is denoted by $ind(G)$. The clique number of $G$, denoted by $\omega(G)$,
is the cardinality of a maximum complete subgraph (clique) of graph $G$. It is well-known that $ind(G) = \omega(\overline{G})$,
where $\overline{G}$ represents the complement of graph $G$, i.e. $V(\overline{G}) = V(G)$ and 
$E(\overline{G}) = \overline{E(G)}$.

A well-known result obtained by Gallai \cite{vc1} states:

\begin{pro} \label{vc1a} \mbox{\rm(\cite{vc1})} For any graph $G$ without isolated vertices, it holds 
$vc(G) = |V(G)| - ind(G)$.
 \end{pro}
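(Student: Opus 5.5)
The plan is to prove that complements of vertex covers are exactly independent sets, and then pass to extremal cardinalities. Throughout, write $V=V(G)$.

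\emph{Step 1: complementation.} Let $C\subseteq V$ and $I=V\setminus C$. The set $C$ is a vertex cover precisely when no edge $uv\in E(G)$ has both $u\notin C$ and $v\notin C$. That is the same as saying no edge has both endpoints in $I$, i.e.\ $I$ is independent. So $C\mapsto V\setminus C$ is a bijection between vertex covers of $G$ and independent sets of $G$.

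\emph{Step 2: extremal values.} This bijection reverses cardinalities, since $|V\setminus C|=|V|-|C|$. Hence a minimum vertex cover corresponds to a maximum independent set, and conversely.
\begin{itemize}
\item A maximum independent set $I$ with $|I|=ind(G)$ gives the cover $V\setminus I$, so $vc(G)\le |V|-ind(G)$.
\item A minimum vertex cover $C$ gives the independent set $V\setminus C$, so $ind(G)\ge |V|-vc(G)$.
\end{itemize}
Together these give $vc(G)=|V(G)|-ind(G)$.

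\emph{Step 3: the hypothesis.} There is essentially no obstacle here; the only point needing care is the role of the no-isolated-vertices assumption. The identity derived above holds for every graph, including edgeless ones, where $vc=0$ and $ind=|V|$. The hypothesis is a carry-over from Gallai's companion identity $\alpha'(G)+\beta'(G)=|V(G)|$ for matchings and edge covers, which genuinely needs it. I would remark that the argument does not use it, so it is harmless, and finish by noting that both directions of the inequality have been established.
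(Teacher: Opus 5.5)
Your proof is correct: a set $C$ is a vertex cover exactly when $V(G)\setminus C$ is independent, and complementation turns a minimum vertex cover into a maximum independent set and back. The paper gives no proof of its own and simply cites Gallai, and yours is the standard argument for that identity. Your remark about the hypothesis is also accurate: the identity holds for every graph, and the no-isolated-vertices condition belongs to Gallai's companion identity for matchings and edge covers, so it is harmless but unused here.
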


The concept of the strong resolving graph $G_{SR}$, introduced in \cite{smetd2}, is defined as follows: 

\begin{dfn} \label{gsr1a} \mbox{\rm(\cite{smetd2})} For any two vertices $u$ and $v$ from graph $G$, we say they are 
{\bf mutually maximally distant} (denoted by $u \, MMD \, v$) if 
$$(\forall w_1 \in N(u)) \, d(v,w_1) \le d(u,v) \wedge (\forall  w_2 \in N(v)) \, d(u,w_2) \le d(u,v)$$ holds. 
\end{dfn} 

\begin{dfn} \label{gsr1b} \mbox{\rm(\cite{smetd2})} Let $G_{SR}$ be graph obtained from $G$, such that \; 
$V(G_{SR}) = V(G)$ and $E(G_{SR}) = \{ uv \,|\, u \in V(G) \wedge v \in V(G) \wedge u \, MMD \, v\}$ 
\end{dfn}  

\begin{thm} \label{gsr1} \mbox{\rm(\cite{smetd2})} For any connected graph $G$ it holds $\beta_S(G) = vc(G_{SR})$.
 \end{thm}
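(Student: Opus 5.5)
The plan is to show two things: a set $S \subseteq V(G)$ is a strong resolving set of $G$ if and only if $S$ is a vertex cover of $G_{SR}$; and (implicitly) the definition of MMD is symmetric, so $G_{SR}$ is a well-defined undirected graph. Taking minimum cardinalities on both sides then gives $\beta_S(G) = vc(G_{SR})$.

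\emph{Necessity.} Suppose $S$ is a strong resolving set and $uv \in E(G_{SR})$, i.e.\ $u \, MMD \, v$. Some $w \in S$ strongly resolves $u,v$; without loss of generality $u$ lies on a shortest $v$--$w$ path. I claim $w=u$. If $w \ne u$, let $w_1$ be the neighbour of $u$ on that path on the side of $w$. Then $d(v,w_1) = d(v,u)+1 > d(u,v)$, which contradicts the first half of the MMD condition. Hence $u \in S$ (or, symmetrically, $v \in S$), so $S$ covers every edge of $G_{SR}$.

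\emph{Sufficiency.} Suppose $S$ is a vertex cover of $G_{SR}$ and $u \ne v$ are arbitrary. The key step is an extension lemma: there exist vertices $x,y$ with $x \, MMD \, y$ such that $u$ lies on a shortest $v$--$x$ path, $v$ lies on a shortest $u$--$y$ path, and every shortest $x$--$y$ path may be taken through $u$ and $v$. To build them:
\begin{itemize}
\item Start from $x=u$ and, while some neighbour $x'$ of $x$ has $d(v,x')>d(v,x)$, replace $x$ by $x'$. This terminates by finiteness, and $u$ stays on a shortest $v$--$x$ path because distances from $v$ increase by exactly $1$ at each step.
\item Then do the same from the $v$ side, replacing $y$ by a neighbour farther from $x$.
\end{itemize}
One checks inductively that $v$ remains on a shortest $x$--$y$ path and $u$ on a shortest $y$--$x$ path. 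When neither process can continue, $x$ and $y$ are maximally distant from each other, i.e.\ $x \, MMD \, y$. Since $S$ covers the edge $xy$, either $x \in S$, which strongly resolves $u,v$ because $u$ lies on a shortest $v$--$x$ path, or $y \in S$, which does so symmetrically.

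The main obstacle is the bookkeeping in this alternating extension. After $y$ has been extended, $x$ may no longer be maximally distant from the new $y$, so one might have to iterate back and forth. Termination follows because $d(x,y)$ strictly increases at every step and is bounded by $Diam(G)$.

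The invariant to verify at each step is that $d(x,y) = d(x,u)+d(u,v)+d(v,y)$. From it, $u$ lies on a shortest $v$--$x$ path and $v$ on a shortest $u$--$y$ path, both of which are needed above. The invariant is preserved because each step increases $d(x,y)$ by exactly one while increasing the corresponding end segment by one, and the triangle inequality then forces equality.
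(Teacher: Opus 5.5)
Your proof is correct. The paper gives no proof of its own for this theorem; it quotes the result from \cite{smetd2}. Your argument is the standard one from that source: strong resolving sets of $G$ are exactly the vertex covers of $G_{SR}$, and sufficiency comes from extending $u,v$ to a mutually maximally distant pair $x,y$ with $d(x,y)=d(x,u)+d(u,v)+d(v,y)$.

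The back-and-forth iteration you worry about never occurs. Once $x$ is maximally distant from $v$, push $y$ away from $x$ while keeping $d(x,y)=d(x,v)+d(v,y)$. Then every $x'\in N(x)$ satisfies $d(x',y)\le d(x',v)+d(v,y)\le d(x,v)+d(v,y)=d(x,y)$, so $x \, MMD \, y$ already holds after a single pass on each side.

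One wording fix: in your extension lemma, ``every shortest $x$--$y$ path may be taken through $u$ and $v$'' should read ``some shortest $x$--$y$ path passes through $u$ and $v$''.
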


Several articles on the strong metric dimension of graphs which are using $G_{SR}$ have been published.
Survey about the state of knowledge on the strong resolving graphs, as well as, some new results
regarding their properties, can be found in \cite{kuz18}. 

The concept of doubly resolving sets of $G$ has been introduced by Caceres et al. \cite{dmetd}, 
who proved that the metric dimension of the Cartesian product $G \Box G$ is tied in a strong sense 
to doubly resolving sets of $G$ with the minimum cardinality. Vertices $x$,$y$ of graph $G$ are 
said to doubly resolve vertices $u$,$v$ of G if $d(u,x)-d(u,y) \ne d(v,x)-d(v,y)$. A vertex set $D$ 
is a doubly resolving set of $G$ if every two distinct vertices of $G$ are doubly resolved by 
some two vertices in $D$. The doubly metric dimension of $G$, denoted by $\psi(G)$, is the minimum cardinality 
of its doubly resolving sets and the doubly metric basis of $G$ is any doubly resolving set of $G$ 
with cardinality $\psi(G)$. Computing $\psi(G)$ is also NP-hard \cite{ilpd}. A tight lower bound for the
doubly metric dimension of a graph has been proposed in \cite{kra26}.

Several other NP-hard graph invariants related to the metric dimension have been recently introduced, 
such as edge metric dimension $\beta_E(G)$ \cite{kel18}, mixed metric dimension $\beta_M(G)$ \cite{kel17}, 
equidistant dimension $eqdim(G)$ \cite{eqdim1}. We omit the definitions of these invariants 
because in this paper we restrict our analysis to the strong and the doubly metric dimension.

Since problems of computing the metric dimension and related invariants are NP-hard in
general case, many papers in the literature have been devoted to finding
the exact values or good bounds for some classes of graphs. In particular, the metric
dimension and resolving sets of Johnson and Kneser graphs have been studied in
\cite{metdj2,metdj1}. The equidistant dimension of Johnson and Kneser graphs is analyzed in 
\cite{eqdj}, while the edge  and mixed metric dimensions of Johnson graphs is the 
subject of \cite{emetdj}. This paper is devoted to the strong and the doubly metric dimensions of 
Johnson and Kneser graphs. We will also present integer linear programming formulations of the strong and the doubly 
metric dimension problems from literature, which will be used in order to obtain 
optimal values for small dimensions.

\subsection{Integer linear programming formulations from literature}

Given a simple connected undirected graph $G$ = ($V$,$E$), where
$V=\{1,2,\dots,n\}$, $|E|=m$, it is easy to determine the length
$d(u,v)$ of a shortest $u-v$ path for all $u,v \in V$, using any
shortest path algorithm.

First we present the integer linear programming (ILP) formulation of the strong metric dimension problem
\cite{ilps}. 

The coefficient matrix $A$ is defined as follows:

\begin{equation} \label{ilpsp}
A_{(u,v),i}  =\begin{cases}
1, & d(u,i) = d(u,v) + d(v,i)\\
1, & d(v,i) = d(v,u) + d(u,i)\\
0, & otherwise
\end{cases}
\end{equation}

where $1 \leq u < v \leq n, 1 \leq i \leq n$. Variable $y_i$
described by (2) determines whether vertex $i$ belongs to a strong
resolving set $S$.

\begin{equation}
y_i  =\begin{cases}
1, & i \in S\\
0, & i \notin S
\end{cases}
\end{equation}

The ILP model of the strong metric dimension problem can now be
formulated as:

\begin{equation} \label{ilpsk}
\min \sum\limits_{i = 1}^n {y_i }
\end{equation}

subject to:

\begin{equation}
\sum\limits_{i = 1}^{n} {A_{(u,v),i} \cdot y_i}  \ge 1\quad \quad
\quad \quad \quad \quad 1 \le u < v \le n
\end{equation}

\begin{equation} \label{ilpsk2}
y_i  \in \{ 0,1\} \quad \quad \quad \quad \quad 1 \le i \le n
\end{equation}

The next ILP formulation of the doubly metric dimension problem is propsed in 
\cite{ilpd}. 

The coefficient matrix $A$ is defined as
follows:

\begin{equation} \label{ilpdp}
A_{(u,v),(i,j)}  = \left\{ {\begin{array}{*{20}c}
   {1,\quad d(u,i) - d(v,i) \ne d(u,j) - d(v,j)}  \\
   {0,\quad d(u,i) - d(v,i) = d(u,j) - d(v,j)}  \\
\end{array}} \right.
\end{equation}

where $1 \leq u < v \leq n, 1 \leq i < j \leq n$. Variable $y_i$ described by (4)
determines whether vertex $i$ belongs to a doubly resolving set $D$. Similarly,
$x_{ij}$ determines whether both $i,j$ are in $D$.

\begin{equation}
y_i  = \left\{ {\begin{array}{*{20}c}
   {1,\quad i \in D}  \\
   {0,\quad i \notin D}  \\
\end{array}} \right.
\end{equation}

\begin{equation} 
x_{ij}  = \left\{ {\begin{array}{*{20}c}
   {1,\quad \quad i,j \in D}  \\
   {0,\quad otherwise}  \\
\end{array}} \right.
\end{equation}

The ILP model of the doubly metric dimension problem can now be formulated as:

\begin{equation} \label{ilpdk}
\min \quad \sum\limits_{k = 1}^n {y_k }
\end{equation}

subject to:

\begin{equation}
\sum\limits_{i = 1}^{n - 1} {\sum\limits_{j = i + 1}^n
{A_{(u,v),(i,j)}  \cdot x_{ij} } }  \ge 1\quad \quad \quad \quad
\quad \quad 1 \le u < v \le n
\end{equation}

\begin{equation}
x_{ij}  \le \frac{1}{2}y_i  + \frac{1}{2}y_j \quad \quad \quad \quad \quad \quad
\quad \quad \quad \quad \quad 1 \le i < j \le n
\end{equation}

\begin{equation}
x_{ij}  \ge y_i  + y_j  - 1\quad \quad \quad \quad \quad \quad \quad \quad \quad
\quad \quad 1 \le i < j \le n
\end{equation}

\begin{equation} \label{ilpdk2}
x_{ij}  \in \{ 0,1\} ,\;y_k  \in \{ 0,1\} \quad \quad \quad \quad
\quad 1 \le i < j \le n,\;1 \le k \le n
\end{equation}

\subsection{Johnson and Kneser graphs and related results from literature}

Let $n$ and $k$ be positive integers ($n > k$) and $[n] = \{1,2,...,n\}$.
Then $k$-subsets are subsets of $[n]$ which have cardinality equal to $k$.
Johnson graph $J_{n,k}$ is an undirected graph defined on all $k$-subsets of set $[n]$ as vertices,
where two $k$-subsets are adjacent if their intersection has cardinality equal to $k-1$.
Formally, $V(J_{n,k}) = \{ A | A \subset [n], |A|=k\}$ and
$E(J_{n,k}) = \{ AB | A,B \subset [n], |A|=|B|=k, |A \bigcap B|=k-1, lex(A) < lex(B)\}$,
where $lex$ is any lexicographic order of $k$-subsets of $[n]$.

It is easy to see that $J_{n,k}$ and $J_{n,n-k}$ are isomorphic, so
we shall only consider Johnson graphs with $n \ge 2k$.
The distance between two vertices $A$ and $B$ in $J_{n,k}$
can be computed by Property \ref{distj}.

\begin{prp} \label{distj} For $A,B \in V(J_{n,k})$ it holds 
$d(A,B) = |A \setminus B| = |B \setminus A| = k-|A \bigcap B|$.
\end{prp}

In a special case when $n=2k$ distance between $\overline{A} = [n] \setminus A$ and $B$ can be
computed by Property \ref{comp}.

\begin{prp} \label{comp} For $A,B \in V(J_{2k,k})$ it holds $d(\overline{A},B) = k - d(A,B) = |A \bigcap B|$
\end{prp}
 
Considering  Property \ref{distj}, it is easy to see that Johnson graph $J_{n,k}$ is a $k (n-k)$-regular graph 
of diameter $k$. 

Kneser graph $K_{n,k}$ is an undirected graph also defined on all $k$-subsets of set $[n]$ as vertices,
where two $k$-subsets are adjacent if their intersection is the empty set.
Formally, $V(K_{n,k}) = \{ A | A \subset [n], |A|=k\}$ and
$E(K_{n,k}) = \{ AB | A,B \subset [n], |A|=|B|=k, A \bigcap B= \emptyset, lex(A) < lex(B)\}$.

Kneser graph is connected only if $n > 2 \cdot k$, it is also $\binom{n-k}{k}$-regular graph.
Specially, for $k=2$, Kneser graph $K_{n,2}$ is the complement of the corresponding 
Johnson graph $J_{n,2}$, and both graphs have diameter 2. Hence, if $d_{K_{n,2}}(A,B)=1$
then $d_{J_{n,2}}(A,B)=2$, and vice versa. Therefore, for $A \ne B$ it holds 
$d_{K_{n,2}}(A,B) = 3 - d_{J_{n,2}}(A,B)$.
 
Distances and diameter of Kneser graphs are investigated by Valencia-Pabon and Vera in \cite{pab05}:

\begin{equation} \label{diam1} 
Diam(K_{n,k}) = \lceil \frac{k-1}{n-2k} \rceil + 1.
\end{equation}

Consequently, if $n \ge 3k-1$ then 

\begin{equation} \label{diam2} 
Diam(K_{n,k}) = 2.
\end{equation}

In this case $d(A,B) = 2 \Leftrightarrow AB \notin E(K_{n,k}) \Leftrightarrow AB \in E((K_{n,k})_{SR})$. 
This implies $(K_{n,k})_{SR} = \overline{K_{n,k}}$.\\

Moreover, in case when $2k+1 \le n < 3k-1$ from (\ref{diam1}) it follows $Diam(K_{n,k}) \ge 3$.
In this case distances can be computed using the result by Stahl in \cite{sta76}:

\begin{lem} \label{sta} \mbox{\rm(\cite{sta76})} Let $A,B \in V(K_{n,k})$.
\begin{itemize}
\item[(i)] If $d(A,B)=2p$ then $|A \bigcap B| \ge k(2p+1)-pn$;
\item[(ii)] If d(A,B)=2p+1 then $|A \bigcap B| \le p(n-2k)$.
\end{itemize}
\end{lem}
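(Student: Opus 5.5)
The plan is to derive both parts from a single estimate on even-length walks, using the quantity $\rho(X,Y) = |X \setminus Y|$ on $k$-subsets. Since $|X|=|Y|=k$, we have $\rho(X,Y) = \frac{1}{2}|X \triangle Y|$. So $\rho$ is half the symmetric-difference (Hamming) distance, and it satisfies the triangle inequality $\rho(X,Z) \le \rho(X,Y) + \rho(Y,Z)$. Both claims only need the existence of a walk of the given length: if $d(A,B)=m$ there is an $A$–$B$ path with $m$ edges. I will therefore prove the stronger statements that a walk of length $2p$ forces $|A\bigcap B| \ge k(2p+1)-pn$, and a walk of length $2p+1$ forces $|A \bigcap B| \le p(n-2k)$.

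\textbf{Step 1 (two steps).} Suppose $X, C, Y$ is a walk of length 2 in $K_{n,k}$. Then $C$ is disjoint from both $X$ and $Y$, so $X \bigcup Y \subseteq [n]\setminus C$ and $|X \bigcup Y| \le n-k$. Since $|X\bigcup Y| = k + |X\setminus Y|$, this gives $\rho(X,Y) \le n-2k$.

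\textbf{Step 2 (even walks).} Let $A=X_0, X_1, \dots, X_{2p}=B$ be a walk of length $2p$. Apply Step 1 to each consecutive pair $X_{2i}, X_{2i+2}$, and then the triangle inequality for $\rho$ $p$ times. This yields $|A\setminus B| \le p(n-2k)$. Hence $|A\bigcap B| = k - |A\setminus B| \ge k - p(n-2k) = k(2p+1) - pn$, which is (i).

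\textbf{Step 3 (odd walks).} Given a walk of length $2p+1$ from $A$ to $B$, let $C$ be the vertex preceding $B$. Then $A$ and $C$ are joined by a walk of length $2p$, and $C \bigcap B = \emptyset$. Hence $A \bigcap B \subseteq A \setminus C$, and by Step 2, $|A\bigcap B| \le |A\setminus C| \le p(n-2k)$, which is (ii).

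The main point requiring care is recognising that the two bounds are necessary conditions that follow from walks; no exact distance formula is needed. The only genuine ingredient is the triangle inequality for $\rho$ via symmetric difference. For $p=0$ both statements reduce to trivialities: in (i) $A=B$ gives $|A\bigcap B|=k$, and in (ii) adjacency gives $A\bigcap B=\emptyset$. So the argument covers all cases uniformly.
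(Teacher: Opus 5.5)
Your proof is correct and complete. The bound $\rho(X,Y)\le n-2k$ for a two-step walk $X,C,Y$ (because $X\cup Y\subseteq[n]\setminus C$) is chained by the triangle inequality for $\rho=\frac12|X\triangle Y|$ to give (i). Removing the last edge $CB$, so that $A\cap B\subseteq A\setminus C$, then gives (ii). The case $p=0$ is covered by the same argument.

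The paper itself does not prove Lemma~\ref{sta}; it only cites Stahl \cite{sta76}, so there is no in-paper argument to compare against. The one alternative available inside the paper is to read the lemma off the exact distance formula (\ref{dkn}) of Valencia-Pabon and Vera. The two terms in that minimum have opposite parities, so exactly one of them attains it. Hence $d(A,B)=2p$ forces $p=\lceil (k-s)/(n-2k)\rceil\ge (k-s)/(n-2k)$, i.e.\ $s\ge k-p(n-2k)=k(2p+1)-pn$. Likewise $d(A,B)=2p+1$ forces $p=\lceil s/(n-2k)\rceil\ge s/(n-2k)$, i.e.\ $s\le p(n-2k)$.

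That derivation is short, but it depends on the much stronger formula (\ref{dkn}). Proving that formula also requires the converse direction, namely constructing short paths. Your argument proves exactly the necessary conditions the lemma states, from first principles. It does so in a stronger form, for arbitrary walks of length $2p$ or $2p+1$ rather than just shortest paths. It also remains valid at $n=2k$, where (\ref{dkn}) is undefined.
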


Moreover, Valencia-Pabon and Vera in \cite{pab05} unify the formula for computing these distances:

\begin{equation} \label{dkn} 
d(A,B) = min \{2 \lceil \frac{k-s}{n-2k} \rceil, 2 \lceil \frac{s}{n-2k} \rceil+1 \}
\end{equation}

where $s = |A \bigcap B|$.

The famous Erd\"os-Ko-Rado theorem, which was proved in 1938, but has not been published until 1961
(\cite{erd61}), says:

\begin{thm} \label{erd} \mbox{\rm(\cite{erd61})} For $n \ge 2k$ it holds $ind(K_{n,k}) = \binom{n-1}{k-1}$.
 \end{thm}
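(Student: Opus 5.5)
We plan to prove the statement in the classical way: exhibit an independent set of the claimed size, then prove the matching upper bound by Katona's cyclic permutation argument. Throughout we use that a set $\mathcal{F}$ of vertices of $K_{n,k}$ is independent exactly when every two members of $\mathcal{F}$ intersect, since adjacency in $K_{n,k}$ means disjointness. So $ind(K_{n,k})$ is the maximum size of an intersecting family of $k$-subsets of $[n]$.

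\emph{Lower bound.} The star $\mathcal{S}=\{A \subset [n] : |A|=k,\ 1 \in A\}$ is intersecting, because any two of its members share the element $1$. It has $\binom{n-1}{k-1}$ members, so $ind(K_{n,k}) \ge \binom{n-1}{k-1}$.

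\emph{Upper bound.} Let $\mathcal{F}$ be an intersecting family of $k$-subsets of $[n]$ with $n \ge 2k$. Place the elements of $[n]$ on a circle in a cyclic order $\sigma$. Call a $k$-set an \emph{arc} of $\sigma$ if its elements are consecutive on the circle; there are exactly $n$ arcs.

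The key lemma is that at most $k$ arcs of a fixed $\sigma$ can belong to $\mathcal{F}$. To prove it, suppose some arc $A=\{a_1,\dots,a_k\}$ (in circular order) lies in $\mathcal{F}$. Any other arc meeting $A$ either starts just after some $a_i$ or ends at some $a_i$, and there are $k-1$ boundary positions between consecutive elements of $A$. For each such position, the arc ending just before it and the arc starting just after it are disjoint, because $n \ge 2k$. Hence at most one arc from each of these $k-1$ pairs is in $\mathcal{F}$. This gives at most $1+(k-1)=k$ arcs in total.

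\emph{Double counting.} Count the pairs $(\sigma, A)$ with $\sigma$ a cyclic order (equivalently, a permutation of $[n]$) and $A \in \mathcal{F}$ an arc of $\sigma$.
\begin{itemize}
\item Using permutations of $[n]$, the lemma gives at most $k \cdot n!$ pairs.
\item Each fixed $k$-set $A$ is an arc in exactly $n \cdot k!\,(n-k)!$ permutations: choose the starting position, order $A$, and order the rest.
\end{itemize}
Hence $|\mathcal{F}| \cdot n \cdot k!\,(n-k)! \le k \cdot n!$, which gives $|\mathcal{F}| \le \frac{k}{n}\binom{n}{k} = \binom{n-1}{k-1}$.

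The main obstacle is the arc lemma, in particular checking that the two arcs at each boundary are genuinely disjoint and distinct. This is exactly where $n \ge 2k$ is used. In the boundary case $n=2k$ the two arcs in a pair are complements of each other, and the argument still holds. As an independent check for $n=2k$: the vertices split into $\frac{1}{2}\binom{2k}{k}$ complementary pairs, each pair is an edge of $K_{2k,k}$, and $\frac{1}{2}\binom{2k}{k}=\binom{2k-1}{k-1}$.
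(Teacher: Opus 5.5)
Your proof is correct. The paper does not prove this statement at all. It quotes the Erd\"os--Ko--Rado theorem from \cite{erd61} and uses it only as a black box, to supply $ind(K_{n,k})$ in Proposition~\ref{sdimj}.

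What you give is Katona's cyclic-permutation proof. This is a genuinely different route from the original Erd\"os--Ko--Rado argument, which obtains the upper bound by shifting (compression) operations and induction. Your route buys brevity and transparency. The hypothesis $n \ge 2k$ enters at exactly one visible place: the disjointness of the arc ending at $a_i$ and the arc starting at $a_{i+1}$, which together occupy $2k$ consecutive positions. Everything else is the double count $|\mathcal{F}| \cdot n \cdot k!\,(n-k)! \le k \cdot n!$.

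The shifting approach is longer but more flexible. It extends to $t$-intersecting families and yields structural information, such as uniqueness of the extremal families (stars) for $n > 2k$. The plain averaging argument does not deliver that without an additional equality analysis.

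For this paper only the numerical value of $ind(K_{n,k})$ is needed. Your argument would therefore make Proposition~\ref{sdimj} self-contained, independent of the citation.
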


The following theorems give the exact values for the metric dimensions of $J_{n,2}$ and $K_{n,2}$
and an upper bound for the metric dimension of $J_{n,k}$ for $k \ge 3$.

\begin{thm} \label{mjk} \mbox{\rm(\cite{metdj1}, Theorem 3.32, Corrolary 3.33)} \\
For $n \ge 6$, it holds $\beta(J_{n,2}) = \beta(K_{n,2}) = \lceil \frac{2n}{3} \rceil$.
 \end{thm}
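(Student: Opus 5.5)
The plan is to reduce both graphs to one combinatorial problem on $K_n$, prove the upper bound by an explicit construction, and then prove a matching counting lower bound.

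\textbf{Step 1: $\beta(J_{n,2})=\beta(K_{n,2})$.} Both graphs have diameter $2$, and $d_{K_{n,2}}(A,B)=3-d_{J_{n,2}}(A,B)$ for $A\neq B$. Take a vertex $W$ and a pair $U\neq V$.
\begin{itemize}
\item If $W\in\{U,V\}$, say $W=U$, then $W$ resolves $U,V$ in both graphs, because $d(W,U)=0$ and $d(W,V)\ge 1$.
\item Otherwise $d_{J}(U,W)\neq d_{J}(V,W)$ holds if and only if $3-d_J(U,W)\neq 3-d_J(V,W)$.
\end{itemize}
Hence a set resolves $J_{n,2}$ exactly when it resolves $K_{n,2}$, and the two metric dimensions coincide.

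\textbf{Reformulation.} It therefore suffices to compute $\beta(J_{n,2})$. A vertex set $R\subseteq V(J_{n,2})$ is a set of edges of $K_n$, i.e. a graph $H$ on $[n]$. A vertex $\{a,b\}\notin R$ has as its distance vector the indicator, over $e\in R$, of whether $e$ meets $\{a,b\}$; vertices in $R$ are separated automatically. So $R$ is resolving iff no two non-edges of $H$ meet exactly the same set of $H$-edges.

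\textbf{Step 2: upper bound $\lceil 2n/3\rceil$.} Partition $[n]$ into blocks of size $3$. On each block $\{a,b,c\}$ take the path $a\!-\!b\!-\!c$, i.e. the two edges $ab,bc$.
\begin{itemize}
\item If $n\equiv 1 \pmod 3$, leave one vertex isolated.
\item If $n\equiv 2 \pmod 3$, attach the two leftover vertices as an extra edge to an existing path, e.g. extend one path to $P_5$, or handle them with one extra edge.
\end{itemize}
Then check that distinct pairs $\{x,y\}\notin H$ meet different $H$-edge sets. Within a component the incident edge sets identify the component and the vertex. For example, $ac$ meets both $ab$ and $bc$ and nothing else, while any $bz$ with $z$ outside the block also meets the edges of $z$'s component. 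A pair of vertices in different components is identified by the two components it touches. This step is a routine but careful case check; small $n$ (and the hypothesis $n\ge 6$) enter only in the remainder blocks.

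\textbf{Step 3: lower bound.} Let $R$ be resolving with graph $H$. I would prove three local claims by exhibiting an unresolved pair $xz,yz$, using that an edge $e\in R$ meets $xz$ but not $yz$ iff $e\ni x$ and $e\cap\{y,z\}=\emptyset$.
\begin{enumerate}
\item At most one vertex of $[n]$ is isolated in $H$; otherwise $xz,yz$ are unresolved for two isolated $x,y$.
\item No component of $H$ is a single edge $xy$ (take any $z$).
\item No vertex is adjacent to two leaves; if $x,y$ are leaves at $w$, take $z=w$.
\end{enumerate}
So every nontrivial component has at least $3$ vertices, and $|R|\ge\sum_i (s_i-1)$ over the components of sizes $s_i$. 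This gives $|R|\ge \frac{2}{3}(n-1)$ in general and $|R|\ge\frac{2}{3}n$ when no vertex is isolated.

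\textbf{Main obstacle.} The hard part is closing the gap when there is an isolated vertex $x$ and every nontrivial component is a tree on exactly $3$ vertices, i.e. a $P_3$. Here I would use pairs of the disjoint type. For a $P_3$ $a\!-\!b\!-\!c$ and the isolated vertex $x$, compare $ax$ with $ac$, or more generally $ax$ with a pair inside another component. I would aim to show that an isolated vertex plus an all-$P_3$ configuration forces an unresolved pair, so that one extra edge is needed, and then compare with $\lceil 2n/3\rceil$ in each residue class of $n \bmod 3$. If the pair $ax$ versus $ac$ is in fact resolved (for instance by $bc$), I would instead refine the count to show the bound $\frac{2}{3}(n-1)$ is attained only when $3\mid n-1$, where it already matches after rounding. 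Either way, the final matching to $\lceil 2n/3\rceil$ is where the careful residue-by-residue work lies.
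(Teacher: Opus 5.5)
Your Step 1 is correct and is the same complement argument ($d_{K_{n,2}}=3-d_{J_{n,2}}$, with the separate case $W\in\{U,V\}$) that the paper uses in proving Theorem \ref{k2}. The paper itself does not prove this theorem; it cites \cite{metdj1} and reproduces only the construction (\ref{dbas}).

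\textbf{Step 2 fails for $n=3t+1$.} Taking $t$ paths plus an isolated vertex $x$ gives only $2t<\lceil 2n/3\rceil=2t+1$ elements, so it would contradict the statement. In fact it is not resolving. For a block path $a\!-\!b\!-\!c$, the pairs $\{b,x\}$ and $\{a,c\}$ both meet exactly $\{ab,bc\}$. This is precisely the case missed by your sentence ``any $bz$ with $z$ outside the block also meets the edges of $z$'s component'' when $z$ is the isolated vertex. You need a $(2t+1)$-st edge and no isolated vertex. The paper joins $3t+1$ to the middle vertex $3t-1$, which turns that block into a $K_{1,3}$. Extending a block to $P_4$ would not work: in $q_1\!-\!q_2\!-\!q_3\!-\!q_4$ the non-edges $q_1q_3$ and $q_2q_4$ meet the same three edges. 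For $n=3t+2$, your option of ``one extra edge'' gives only $2t+1$ edges and cannot work. The $P_5$ option, or the paper's $K_{1,4}$, uses two extra edges and is fine.

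\textbf{Step 3 leaves the isolated-vertex case open in both residues.} Claims 1 and 2 are correct. Claim 3 is false: $xw$ and $yw$ lie in $R$ and are trivially resolved, and stars are allowed (the paper's basis contains them). You do not use it, though. Your count falls one short whenever there is an isolated vertex: it gives $2t$ for $n=3t+1$ and $2t+1$ for $n=3t+2$. Your fallback claim that $\frac{2}{3}(n-1)$ ``already matches after rounding'' when $3\mid n-1$ is false, since $2t<2t+1$.

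The missing lemma is exactly the collision above. If $x$ is isolated and some component is a path $a\!-\!b\!-\!c$, then $\{b,x\}$ and $\{a,c\}$ are unresolved. Your candidate pair $\{a,x\}$ versus $\{a,c\}$ is resolved by $bc$, so it does not help. Equality in your count forces all components to be $P_3$ when $n=3t+1$. When $n=3t+2$ it forces $t-1$ copies of $P_3$ plus one tree on four vertices. The lemma rules out both configurations, but in the second case only when $t\ge 2$.

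This is where the hypothesis $n\ge 6$ is genuinely used, in the lower bound rather than in the remainder blocks of the construction. For $n=5$, the set $\{\{1,2\},\{1,3\},\{1,4\}\}$, with $5$ isolated, resolves $J_{5,2}$. Hence $\beta(J_{5,2})=3<\lceil 10/3\rceil=4$.
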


\begin{thm} \label{mjk1} \mbox{\rm(\cite{metdj2})} For $n \ge 2k$ and $k \ge 3$, it holds $\beta(J_{n,k}) \le \lfloor \frac{k \cdot (n+1)}{k+1} \rfloor$.
 \end{thm}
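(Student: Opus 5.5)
The plan is to exhibit an explicit resolving set of size at most $\lfloor \frac{k(n+1)}{k+1} \rfloor$. By Property \ref{distj}, $d(A,B)=k-|A\cap B|$, so a set $R$ of $k$-subsets resolves $J_{n,k}$ exactly when the vector $(|A\cap B|)_{B\in R}$ determines $A$. Equivalently, it must determine the indicator values $a_x=[x\in A]$ for every $x\in[n]$. So I will design sets $B$ from whose intersection sizes each $a_x$ can be decoded.

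Write $n=q(k+1)+r$ with $0\le r\le k$; note $q\ge 1$ since $n\ge 2k\ge k+1$. Then
\[
\left\lfloor \frac{k(n+1)}{k+1}\right\rfloor = kq+\left\lfloor \frac{k(r+1)}{k+1}\right\rfloor = kq+r,
\]
because $k(r+1)/(k+1)=r+1-(r+1)/(k+1)$ and $1\le r+1\le k+1$. So it suffices to use $k$ vertices per block of $k+1$ elements and one vertex per leftover element.

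Partition $[n]$ into blocks $P_1,\dots,P_q$ of size $k+1$ and a remainder $Q$ with $|Q|=r$. In each block $P$, fix an element $x_0\in P$ and take the $k$ vertices $P\setminus\{x\}$ for $x\in P\setminus\{x_0\}$. With $m=|A\cap P|$ we get $v_x=|A\cap(P\setminus\{x\})|=m-a_x$, so each $v_x\in\{m,m-1\}$.

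The key step is to show these $k$ values determine $m$, and hence all $a_x$ with $x\ne x_0$, and also $a_{x_0}=m-\sum_{x\ne x_0}a_x$.
\begin{itemize}
\item If the $v_x$ take two different values, the larger one is $m$.
\item If all $v_x$ equal a common value $v$, there are two possibilities. Either $m=v$ and all $a_x=0$, which forces $a_{x_0}=v$ and so $v\le 1$. Or $m=v+1$ and all $a_x=1$, which forces $a_{x_0}=v+1-k\in\{0,1\}$ and so $v\ge k-1$.
\end{itemize}
Since $k\ge 3$, the ranges $v\le 1$ and $v\ge k-1\ge 2$ are disjoint, so $m$ is determined. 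This small case analysis is the one place where the hypothesis $k\ge 3$ enters, and it is the step I expect to need the most care.

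For each $z\in Q$, add the vertex $B_z=(P_1\setminus\{x,y\})\cup\{z\}$ for two fixed elements $x,y\in P_1$. Then $|A\cap B_z|=|A\cap P_1|-a_x-a_y+a_z$. All terms except $a_z$ are already known from block $P_1$, so $a_z$ is recovered.

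All chosen sets are distinct $k$-subsets, and there are $kq+r$ of them. Since they determine every $a_x$, they determine $A$, so they form a resolving set, which gives $\beta(J_{n,k})\le kq+r=\lfloor \frac{k(n+1)}{k+1}\rfloor$. The remaining routine checks are the floor identity above and that the remainder vertices $B_z$ are distinct from the block vertices, which holds because each $B_z$ contains a point of $Q$.
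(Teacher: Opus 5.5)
Your proof is correct and complete. The floor identity $\lfloor k(n+1)/(k+1)\rfloor=kq+r$ holds, including the edge case $r=k$. The within-block case analysis is sound: the all-equal readings $v\le 1$ and $v\ge k-1$ cannot overlap when $k\ge 3$. The remainder vertices $B_z$ recover each $a_z$, and you correctly count $kq+r$ distinct $k$-subsets.

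The paper does not prove this statement itself; it only quotes it from \cite{metdj2}, so there is no in-paper argument to compare yours against. Your set is the natural generalization of the paper's $k=2$ construction (\ref{dbas}). There $[n]$ is cut into blocks $\{3i-2,3i-1,3i\}$ of size $k+1=3$. Each block contributes the two vertices $P\setminus\{x\}$ with $x\neq 3i-1$, so $x_0=3i-1$ plays the role of your fixed element. Each leftover element $z$ gets one vertex $\{3t-1,z\}$ borrowed from a full block, just like your $B_z$.

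Your case analysis also shows why $k\ge 3$ allows a purely local proof. Your decoding of $A\cap P$ never uses $|A|=k$, so each block is read off independently, and the set in fact separates all subsets of $[n]$ by intersection sizes. For $k=2$ this breaks exactly at the overlap you identified. The reading $v=1$ fits both $A\cap P=\{x_0\}$ and $A\cap P=P\setminus\{x_0\}$, so the $J_{n,2}$ case needs a global argument that uses $|A|=2$.
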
 

Let us note that for $k=2$, using simple calculation by modulo 3, it holds $\lfloor \frac{2 \cdot (n+1)}{3} \rfloor = \lceil \frac{2n}{3} \rceil$.

The equidistant dimensions of Johnson and Kneser graphs have been studied
in \cite{eqdj} where the following theorems have been proved.

\begin{pro} \mbox{\rm(\cite{eqdj})} For $n \ge 6$ it holds $eqdim(J_{n,2}) = eqdim(K_{n,2}) = 3$ \end{pro}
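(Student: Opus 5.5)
First, the two cases can be reduced to one. Recall that a vertex $w$ is equidistant from $u$ and $v$ if $d(u,w)=d(v,w)$, and that $eqdim(G)$ is the minimum size of a set $S$ such that every pair of distinct vertices has an equidistant vertex in $S$. If $w\in\{u,v\}$, say $w=u$, then $d(u,w)=0\ne d(v,w)$, so an equidistant vertex always differs from both $u$ and $v$. For pairwise distinct vertices we have $d_{K_{n,2}}(A,B)=3-d_{J_{n,2}}(A,B)$, so $w$ is equidistant from $u,v$ in $J_{n,2}$ if and only if it is in $K_{n,2}$. Hence the equidistance-covering sets of the two graphs coincide, $eqdim(J_{n,2})=eqdim(K_{n,2})$, and it suffices to work in $J_{n,2}$.

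In $J_{n,2}$ take $w\notin\{u,v\}$. By Property \ref{distj}, $d(u,w)=1$ if $u\cap w\neq\emptyset$ and $d(u,w)=2$ otherwise. So $w$ equalizes $u,v$ exactly when either both of $u,v$ meet $w$ or neither does. A set $S$ therefore fails for the pair $u,v$ exactly when every $w\in S$ either equals one of $u,v$ or meets exactly one of them.

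\textbf{Lower bound $eqdim\ge 3$.} Let $S=\{w_1,w_2\}$ be arbitrary; a single vertex is handled in the same way. Put $u=w_1$, so $w_1$ cannot equalize any pair containing $u$. Now choose a $2$-subset $v\ne w_1,w_2$ with $d(v,w_2)\neq d(w_1,w_2)$. If $w_1$ meets $w_2$, take $v$ disjoint from $w_2$. If $w_1$ is disjoint from $w_2$, take $v$ meeting $w_2$ in one element, chosen so that $v\ne w_1$. Since $n\ge 6$ there is enough room for these choices. Then no vertex of $S$ equalizes $u,v$.

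\textbf{Upper bound $eqdim\le 3$.} This is the main obstacle: we need an explicit $3$-set $S$ that leaves no bad pair. The natural candidates fail. For $S=\{12,34,56\}$, the pair $u=13$, $v=5x$ with $x\ge 7$ is bad. For the triangle $\{12,13,23\}$, the pair $12,45$ is bad. So I would try mixed configurations such as $\{12,13,45\}$ or $\{12,23,45\}$ and run the case analysis above.

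The case analysis goes as follows. First, classify each $2$-subset $u$ by its intersection pattern with the ground set $\bigcup S$, which has at most $6$ elements; this pattern determines which $w\in S$ the set $u$ meets. A bad pair then requires the hit sets of $u$ and $v$ on $S$ to be complementary, apart from the exceptions where $u$ or $v$ equals some $w$. Second, enumerate these few patterns and find, for each would-be bad pair, a $w\in S$ hit by both or by neither.

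If every fixed $3$-set turns out to fail for some $n$, I would let $S$ depend on $n$. Failing that, I would fall back on the ILP model of Section 1.1 to find a valid configuration for small $n$ and extract a general pattern from it.
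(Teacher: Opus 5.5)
\textbf{The gap is in the definition you start from.} The paper omits it, and the version you recall is not the one under which the statement holds. In \cite{eqdim1}, $S$ is a distance-equalizer set if every two distinct vertices of $V(G)\setminus S$ have an equidistant vertex in $S$. Pairs containing a vertex of $S$ impose no condition.

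Your version cannot be the intended one. In $J_{2k,k}$ with $k$ odd, Property \ref{comp} shows that no vertex $w$ is equidistant from $A$ and $\overline{A}$, since that would require $d(A,w)=k/2$. So no set would satisfy your condition at all, contradicting the companion result $eqdim(J_{2k,k})=\frac{1}{2}\binom{2k}{k}$.

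Under your definition the statement is in fact false for every $n\ge 6$. Given $S=\{w_1,w_2,w_3\}$, pick $v\neq w_1$ whose incidence with $w_2,w_3$ is the opposite of that of $w_1$:
if $w_1$ meets both, take $v$ disjoint from $w_2\cup w_3$;
if it meets neither, take $v$ with one element in $w_2\setminus w_3$ and one in $w_3\setminus w_2$;
if it meets only $w_2$ (symmetrically for $w_3$), take $v=\{x,z\}$ with $x\in w_3\setminus w_2$ and $z\notin w_1\cup w_2\cup w_3$. Such $z$ exists because then $|w_1\cup w_2\cup w_3|\le 5$.
In each case no vertex of $S$ equalizes $w_1,v$. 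So the upper bound you flag as the main obstacle is unattainable, and neither an $n$-dependent $S$ nor the ILP can rescue it.

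\textbf{With the correct definition, both halves of your plan change.}

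Your lower-bound argument breaks down, because the pair it produces contains $u=w_1\in S$. Argue instead with two vertices outside $S$ that have complementary incidence:
if $w_1=\{a,b\}$ and $w_2=\{c,d\}$ are disjoint, take $\{a,c\}$ and a $2$-set disjoint from $\{a,b,c,d\}$ (this is where $n\ge 6$ enters);
if $w_1=\{a,b\}$ and $w_2=\{a,c\}$, take $\{b,x\}$ and $\{c,x\}$ with $x\notin\{a,b,c\}$.
The case $|S|\le 1$ is immediate.

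For the upper bound, the triangle $S=\{\{1,2\},\{1,3\},\{2,3\}\}$ that you discarded works. The bad pair you found, $\{1,2\},\{4,5\}$, contains a vertex of $S$ and so imposes no condition. Every $2$-set outside $S$ contains at most one of $1,2,3$. If it contains $i$, it meets exactly the two members of $S$ through $i$; otherwise it meets none. Hence:
two such sets containing $i\neq j$ both meet $\{i,j\}$;
two containing the same $i$ both meet any member through $i$;
one containing $i$ and one avoiding $\{1,2,3\}$ both miss $\{1,2,3\}\setminus\{i\}$;
two avoiding $\{1,2,3\}$ both miss every member.

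Your transfer to $K_{n,2}$ via $d_{K_{n,2}}=3-d_{J_{n,2}}$ is correct. It becomes even simpler, since a vertex of $S$ is automatically distinct from $u,v\notin S$.
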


\begin{pro} \mbox{\rm(\cite{eqdj})} For any odd $k \ge 3$ it holds $eqdim(J_{2k,k}) = \frac{1}{2} \cdot \binom{2k}{k}$. 
 \end{pro}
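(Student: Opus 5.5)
The plan is to prove matching lower and upper bounds of $\frac{1}{2}\binom{2k}{k}$. Both rest on the complementation structure of $J_{2k,k}$ given by Property \ref{comp}. I use the standard definition from \cite{eqdim1}: a set $S$ is a distance-equalizer set if every two distinct vertices $u,v \notin S$ have some $w \in S$ with $d(u,w)=d(v,w)$, and $eqdim(G)$ is the minimum cardinality of such a set. The vertex set splits into $\frac{1}{2}\binom{2k}{k}$ complementary pairs $\{A,\overline{A}\}$.

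\textbf{Lower bound.} Take any pair $u$, $\overline{u}$. By Property \ref{comp}, every $w$ satisfies $d(\overline{u},w)=k-d(u,w)$. So $d(u,w)=d(\overline{u},w)$ would force $d(u,w)=k/2$, which is impossible because $k$ is odd. Hence no vertex equalizes $u$ and $\overline{u}$. A distance-equalizer set therefore cannot leave both of them outside, so it contains at least one vertex of every complementary pair. This gives $eqdim(J_{2k,k}) \ge \frac{1}{2}\binom{2k}{k}$.

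\textbf{Upper bound.} Let $S$ contain exactly one vertex from each complementary pair. Two steps are needed.

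\emph{Closure under complementation.} For any $u,v$, the set $E(u,v)$ of vertices equidistant from $u$ and $v$ is closed under $w \mapsto \overline{w}$. Indeed, if $d(u,w)=d(v,w)$ then $d(u,\overline{w})=k-d(u,w)=k-d(v,w)=d(v,\overline{w})$ by Property \ref{comp}. Consequently, whenever $E(u,v)\neq\emptyset$, the set $S$ meets it.

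\emph{Nonemptiness.} Let $u,v \notin S$ be distinct. Then $v \ne \overline{u}$, since $\overline{u}\in S$. Set $d=d(u,v)$, so $1\le d\le k-1$ and $d=|u\setminus v|=|v\setminus u|$. Also $|u\cap v|=k-d$, and the complement of $u\cup v$ has size $2k-(k+d)=k-d$. I build $w$ from $j$ elements of $u\setminus v$, $j$ elements of $v\setminus u$, and $k-2j$ elements taken from $u\cap v$ together with the outside part. Then $|w\cap u|=|w\cap v|$, so $d(u,w)=d(v,w)$ by Property \ref{distj}. Feasibility requires $0\le j\le d$ and $0\le k-2j\le 2(k-d)$.
\begin{itemize}
\item If $d\le k/2$, take $j=0$; the condition reads $k\le 2(k-d)$, which holds.
\item If $d>k/2$, take $j=\lfloor k/2\rfloor=(k-1)/2$, which is at most $d$. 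Then $k-2j=1\le 2(k-d)$ because $d<k$.
\end{itemize}

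Combining the two steps shows $S$ is a distance-equalizer set of size $\frac{1}{2}\binom{2k}{k}$, which gives equality. The only step needing care is this explicit construction of an equidistant $w$, in particular the case $d>k/2$, where oddness of $k$ is again used. The rest follows directly from Property \ref{comp}.
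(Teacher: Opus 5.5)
The paper gives no proof of this proposition. It is only quoted from \cite{eqdj} as background, so there is no in-paper argument to compare yours against.

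Your proof is correct and complete, and it uses only Properties \ref{distj} and \ref{comp} of this paper.
\begin{itemize}
\item \textbf{Lower bound.} Property \ref{comp} gives $d(\overline{u},w)=k-d(u,w)$. For odd $k$ no vertex can then be equidistant from $u$ and $\overline{u}$. Hence every distance-equalizer set meets each of the $\frac{1}{2}\binom{2k}{k}$ complementary pairs.
\item \textbf{Upper bound.} The key step is that the set of vertices equidistant from $u$ and $v$ is closed under complementation. This reduces the problem to exhibiting a single equidistant vertex for each pair $u,v\notin S$. Your balanced construction supplies it. The bound $1\le d\le k-1$ is justified because $\overline{u}\in S$ forces $v\neq\overline{u}$. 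The feasibility conditions $0\le j\le d$ and $0\le k-2j\le 2(k-d)$ check out in both cases.
\end{itemize}

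Two small remarks.
\begin{itemize}
\item Oddness of $k$ is not actually used in the case $d>k/2$. For even $k$ you can take $j=k/2$, so $k-2j=0$. So the bound $eqdim(J_{2k,k})\le\frac{1}{2}\binom{2k}{k}$ holds for every $k$, and oddness enters only in the lower bound.
\item Your upper-bound argument works for an arbitrary set containing exactly one vertex of each complementary pair. Together with the lower bound, this gives a characterization. For odd $k$, the minimum distance-equalizer sets of $J_{2k,k}$ are exactly the sets containing precisely one vertex from each complementary pair.
\end{itemize}
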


\begin{pro} \mbox{\rm(\cite{eqdj})} For $n \ge 9$ it holds  $eqdim(J_{n,3}) \le n-2$. 
 \end{pro}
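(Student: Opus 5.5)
The plan is a direct construction: exhibit a distance-equalizer set $S\subseteq V(J_{n,3})$ with $|S|=n-2$ and check it case by case. By Property \ref{distj}, $d(A,W)=3-|A\cap W|$ in $J_{n,3}$. So a vertex $W$ equalizes $A$ and $B$ exactly when
$$|A\cap W|=|B\cap W|.$$
The whole problem is therefore about intersection sizes of $3$-subsets, and every verification reduces to counting. Note also that $A\cup B$ has at most $6$ elements, so for $n\ge 9$ at least $3$ elements of $[n]$ lie outside $A\cup B$. This is where the hypothesis $n\ge 9$ will enter.

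The first building block is the ``star'' family $W_i=\{1,2,i\}$. For a pair $A,B$ put $a=|A\cap\{1,2\}|$ and $b=|B\cap\{1,2\}|$, and assume $a\ge b$. Then $|A\cap W_i|=a+[i\in A]$ and $|B\cap W_i|=b+[i\in B]$.
\begin{itemize}
\item If $a=b$, take any $i\ge 3$ outside $A\cup B$; such an $i$ exists because $n\ge 9$.
\item If $a=b+1$, we need some $i\ge 3$ with $i\in B\setminus A$. This exists because $B$ has $3-b$ elements outside $\{1,2\}$, while $A$ has only $2-b$.
\end{itemize}
The star family thus handles every pair except those with $a=2$ and $b=0$, that is, $A=\{1,2,x\}$ and $B\subseteq\{3,\dots,n\}$. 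For these pairs $|A\cap W_i|\ge 2>|B\cap W_i|$ for every $i$, so no star vertex can help.

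The plan is therefore to drop a few star vertices, keeping roughly $n-4$ or $n-5$ of them, and to replace them by a small number of ``correction'' vertices that do not contain both $1$ and $2$. A natural first choice is sets such as $\{1,y,z\}$, $\{2,y,z\}$ and $\{y,z,t\}$ with $y,z,t\ge 3$. They must be chosen to equalize every pair $A=\{1,2,x\}$, $B\subseteq\{3,\dots,n\}$, which requires
$$|\{1,2,x\}\cap W|=|B\cap W|.$$
For example, $W=\{1,y,z\}$ works whenever $x\notin\{y,z\}$ and $|B\cap\{y,z\}|=1$. I would aim for two or three correction vertices whose supports overlap cleverly, so that for every $x$ and every $B$ one of them applies. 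Then I would recheck the two cases above with the reduced star family, using the slack from $n\ge 9$ to make sure a suitable index $i$ is still available.

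The main obstacle is exactly this balancing. Removing star vertices weakens the case $a=b+1$, which can require the single index $i\in B\setminus A$, and that index may be one of the removed ones. Yet no single correction vertex covers all the extreme pairs. So the verification becomes a finite case analysis over how $A$ and $B$ meet the small ``special'' set of coordinates, namely $\{1,2\}$ together with the supports of the correction vertices. I would organize it by the profile of $(A\cap T,\,B\cap T)$, where $T$ is that special set. In each profile, either an unused free coordinate exists (guaranteed by $n\ge 9$) or a specific correction vertex equalizes the pair.
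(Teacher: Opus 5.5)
There is a genuine gap, and it comes from using the wrong definition. The paper only quotes this bound from the literature and gives no proof of its own, so your proposal has to stand on its own. As written it does not: the second half is only a plan (``I would aim for two or three correction vertices\dots''). No correction vertices are specified and no case analysis is carried out, so no set of size $n-2$ is actually exhibited and verified. More importantly, the difficulty that motivates this plan is an artifact of a misreading. In the standard definition (Gonz\'alez, Hernando and Mora), a distance-equalizer set $S$ only has to equalize pairs of distinct vertices of $V\setminus S$; vertices of $S$ are exempt. Your all-pairs reading cannot be the intended one. For odd $k$ and complementary vertices $A,\overline{A}$ of $J_{2k,k}$, one has $|A\cap W|+|\overline{A}\cap W|=k$ for every $W$, so that pair could never be equalized. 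The value $\frac12\binom{2k}{k}$ quoted in the paper would then be meaningless; in fact it is exactly the lower bound obtained by forcing one vertex of each complementary pair into $S$.

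With the correct definition, your first part is already a complete proof. Take $S=\{\{1,2,i\}: 3\le i\le n\}$, which has exactly $n-2$ elements. The only pairs the star family fails on are $A=\{1,2,x\}$ with $B\subseteq\{3,\dots,n\}$, and there $A\in S$, so these pairs are exempt. For $A,B\notin S$ we have $a,b\in\{0,1\}$, giving two cases.
\begin{itemize}
\item If $a=b$: the set $(A\cup B)\cap\{3,\dots,n\}$ has at most $6$ elements, while $\{3,\dots,n\}$ has $n-2\ge 7$. So some $i\ge 3$ lies outside $A\cup B$, and then $|A\cap W_i|=|B\cap W_i|=a$.
\item If, say, $a=1$ and $b=0$: $B$ has three elements $\ge 3$ and $A$ only two, so some $i\in B\setminus A$ gives $|A\cap W_i|=|B\cap W_i|=1$.
\end{itemize}
The hypothesis $n\ge 9$ is sharp for this particular set. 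For $n=8$, the pair $\{3,4,5\},\{6,7,8\}$ is not equalized by any $\{1,2,i\}$. So drop the ``correction'' step entirely. Besides being unnecessary, it is not clear it can succeed under your stronger requirement, since removing star vertices breaks the case $a=b+1$, as you noted yourself.
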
 

The exact values of the edge and mixed metric dimensions of Johnson graph $J_{n,2}$
have been obtained in \cite{emetdj} 

\begin{thm} \label{edj} \mbox{\rm(\cite{emetdj})} For $n \ge 5$ it holds $\beta_E(J_{n,2}) =  \beta_M(J_{n,2}) = \binom{n}{2} - \lfloor \frac{n}{2} \rfloor$.
 \end{thm}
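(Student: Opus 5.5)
For $n \ge 5$ we prove $\beta_E(J_{n,2}) = \beta_M(J_{n,2}) = \binom{n}{2} - \lfloor \frac{n}{2} \rfloor$ by a lower bound on $\beta_E$ and an upper bound on $\beta_M$. We use $\beta_E(G) \le \beta_M(G)$, which holds because a mixed resolving set distinguishes in particular all pairs of edges.

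For the lower bound, take an edge resolving set $S$ and two adjacent vertices of $J_{n,2}$, say $A=\{1,2\}$ and $B=\{1,3\}$. Together with $X=\{2,3\}$ they form a triangle. Consider the edges $AX$ and $BX$, with the usual vertex-to-edge distance $d(w,uv)=\min\{d(w,u),d(w,v)\}$. We check every vertex $w$ using Property \ref{distj}:
\begin{itemize}
\item if $w$ contains $2$ but not $3$, then $d(w,A)=d(w,X)=1$, so both edges are at distance $1$;
\item if $w$ contains $3$ but not $2$, the situation is symmetric;
\item if $w$ contains $1$ and is neither $A$ nor $B$, then $d(w,A)=d(w,B)=1$, so both minima equal $1$;
\item if $w$ is disjoint from $\{1,2,3\}$, all three distances are $2$;
\item if $w=X$, both edges are at distance $0$.
\end{itemize}
Hence only $w=A$ and $w=B$ distinguish $AX$ from $BX$, so $S$ must contain $A$ or $B$. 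Therefore $V \setminus S$ is an independent set of $J_{n,2}$, i.e.\ a family of pairwise disjoint $2$-subsets of $[n]$ (a matching of $K_n$). Such a family has at most $\lfloor n/2 \rfloor$ members, which gives $\beta_E(J_{n,2}) \ge \binom{n}{2} - \lfloor n/2 \rfloor$.

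For the upper bound, fix a maximum matching $M$ of $K_n$, regarded as a set of $\lfloor n/2\rfloor$ pairwise disjoint vertices of $J_{n,2}$, and let $S = V \setminus M$. We show $S$ is a mixed resolving set. Any element (vertex or edge) that contains a vertex $x\in S$ is at distance $0$ from $x$. Since $M$ is independent, every edge has an endpoint in $S$; so the only elements at positive distance from all of $S$ are the vertices of $M$. Thus a pair is automatically resolved unless either both members avoid $S$, or both contain the same $S$-vertex $x$. This leaves a short list of cases:
\begin{enumerate}
\item two matching vertices $\{1,2\},\{3,4\}$, resolved by $\{1,5\}\in S$ at distances $1$ versus $2$ (here $n\ge5$ is used);
\item a vertex $x$ versus an incident edge $xy$ with $y \in M$, resolved by any $w\in S$ adjacent to $y$ but not to $x$;
\item two edges $xy, xz$ sharing $x$, where we need $w\in S$ at distance $0$ or closer to $y$ than to both $x$ and $z$, with subcases by whether $y,z$ lie in $S$, whether $y,z,x$ form a triangle (share a common element), and how $y,z$ meet $M$;
\item a matching vertex versus an edge or vertex containing some $S$-vertex, which is immediate since that $S$-vertex gives distance $0$ versus positive.
\end{enumerate}

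The main obstacle is the edge-versus-edge case 3. In each subcase we will exhibit an explicit $2$-subset $w \notin M$, chosen using a fifth element of $[n]$ and the fact that each element lies in at most one matching pair. This gives $\beta_M(J_{n,2}) \le \binom{n}{2} - \lfloor n/2 \rfloor$, and combined with the lower bound proves the theorem.
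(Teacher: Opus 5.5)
The paper does not prove this statement; it quotes it from \cite{emetdj} as background, so there is no in-paper argument to compare with. Taken on its own terms, your proof is correct.

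\textbf{Lower bound.} This part is sound. In the triangle $A=\{1,2\}$, $B=\{1,3\}$, $X=\{2,3\}$, every vertex other than $A$ and $B$ is at equal distance from the edges $AX$ and $BX$. Hence any edge resolving set is a vertex cover of $J_{n,2}$, which gives $\beta_E(J_{n,2})\ge \binom{n}{2}-ind(J_{n,2})=\binom{n}{2}-\lfloor n/2\rfloor$, in line with Proposition \ref{vc1a}. One slip: your first bullet should read ``contains $2$ but neither $1$ nor $3$''. As written it includes $w=A$, for which $d(w,A)=0$. The conclusion is unaffected.

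\textbf{Link to the paper.} Your set $S=V\setminus M$ is exactly the set used in Step 2 of Theorem \ref{k3s} for $k=2$. Also $(K_{n,2})_{SR}=\overline{K_{n,2}}=J_{n,2}$. So your argument in effect shows $\beta_E(J_{n,2})=\beta_M(J_{n,2})=vc(J_{n,2})=\beta_S(K_{n,2})$.

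\textbf{Upper bound: reduction.} Your reduction is valid: a pair not separated at distance $0$ either consists of two vertices of $M$, or both members contain the same $S$-vertex. However, cases 2 and 3 are left as promises and should be written out.

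\textbf{Case 3.} It is much simpler than your proposed subcase split.
\begin{itemize}
\item If $y$ or $z$ lies in $S$, that vertex already resolves $xy$ and $xz$ at distance $0$.
\item Two members of $M$ are disjoint, so they cannot form a triangle with $x$.
\end{itemize}
So the only configuration, up to relabelling, is $y=\{a,b\}\in M$, $z=\{c,d\}\in M$ and $x=\{a,c\}\in S$. Take $w=\{b,e\}$ with $e\notin\{a,b,c,d\}$; this is where $n\ge 5$ is needed. Then $w\in S$, since it meets $y\in M$ without equal to $y$. Moreover $d(w,xy)=1$ and $d(w,xz)=2$.

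\textbf{Case 2.} Let $y=\{a,b\}\in M$ and $x=\{a,c\}$. The vertex $w=\{b,e\}$ with $e\notin\{a,b,c\}$ lies in $S$ and gives $d(w,xy)=1\ne 2=d(w,x)$.

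With these witnesses supplied, the proof is complete.
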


\section{The strong metric dimension of Johnson and Kneser graphs}

\begin{lem} \label{sdimjl} Vertices $A,B \in V(J_{n,k})$ are mutually maximally distant if and only if $d(A,B) = k$.\end{lem}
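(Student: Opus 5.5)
Both directions can be checked directly from Definition~\ref{gsr1a}, using Property~\ref{distj} ($d(A,B)=|A\setminus B|$) and the standing assumption $n\ge 2k$.

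\emph{Sufficiency.} Suppose $d(A,B)=k$. Since $J_{n,k}$ has diameter $k$, every vertex $W$ satisfies $d(B,W)\le k=d(A,B)$. In particular this holds for every neighbour $W_1$ of $A$, and by symmetry for every neighbour $W_2$ of $B$. Hence $A \, MMD \, B$.

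\emph{Necessity.} We argue by contraposition. Suppose $d(A,B)=t<k$, so $|A\cap B|=k-t\ge 1$. It suffices to exhibit a neighbour $W_1$ of $A$ with $d(B,W_1)=t+1$. Choose $x\in A\cap B$ and $y\in[n]\setminus(A\cup B)$, and set $W_1=(A\setminus\{x\})\cup\{y\}$. Then $|W_1\cap A|=k-1$, so $W_1\in N(A)$. Also $W_1\cap B=(A\cap B)\setminus\{x\}$, since $y\notin B$. Therefore $|W_1\cap B|=k-t-1$, and by Property~\ref{distj} we get $d(B,W_1)=t+1>d(A,B)$. This violates the first condition of Definition~\ref{gsr1a}, so $A$ and $B$ are not mutually maximally distant.

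The only step needing care is the existence of $y$. We have $|A\cup B|=2k-|A\cap B|=k+t\le 2k-1<n$ because $t<k$ and $n\ge 2k$. So $[n]\setminus(A\cup B)$ is nonempty and $y$ exists. The cases $A=B$ ($t=0$) and $t\ge1$ are handled uniformly by this argument, so no case split is required.
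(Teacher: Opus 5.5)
Your proof is correct and matches the paper's argument: the diameter bound gives sufficiency, and for necessity you swap an element of $A\cap B$ for an element outside $A\cup B$ (which exists since $|A\cup B|\le 2k-1<n$) to get a neighbour that is strictly farther away. The only difference is cosmetic: you perturb $A$ and violate the first MMD condition, while the paper perturbs $B$ and violates the second, and the two cases are symmetric.
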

\begin{proof}
($\Leftarrow$) If $d(A,B) = k = Diam(J_{n,k})$ it follows that all distances are at most $k$. Therefore,
$A$ and $B$ are mutually maximally distant. \\
($\Rightarrow$) Suppose the opposite, i.e. $d(A,B) < k = Diam(J_{n,k}) \Rightarrow A \bigcap B \ne \emptyset \Rightarrow
|A \bigcup B| < 2k \le n$. 
Then there exist $a \in A \bigcap B$ and $b \in V(J_{n,k}) \setminus (A \bigcup B)$. Let $C = (B \setminus \{a\}) \bigcup \{b\}$.
Then $A \bigcap C \subset A \bigcap B \Rightarrow |A \bigcap C| < |A \bigcap B| \Rightarrow d(A,C) = k - |A \bigcap C| > k - |A \bigcap B|
= d(A,B)$. Since $|C|=k$, $d(A,C) > d(A,B)$ and $|B \bigcap C| = |B|-1 = k-1 \Rightarrow d(B,C)=1 \Rightarrow C \in N(B)$,
so $A$ and $B$ are not mutually maximally distant in graph $J_{n,k}$.
Therefore, by contra-position, we have proved that if vertices $A,B \in V(J_{n,k})$ are mutually maximally distant then $d(A,B) = k$.
\end{proof}

Although the following results are known for more than a decade:
\begin{itemize}
\item Famous Erd\"os-Ko-Rado theorem (Theorem \ref{erd});
\item Gallai result from \cite{vc1} (Proposition \ref{vc1a}); 
\item Theorem from Oellermann et al. (\cite{smetd2} (Theorem \ref{gsr1}),
\end{itemize}
the next proposition uses them together, for the first time,
in order to obtain the strong metric dimension of Johnson graphs $J_{n,k}$.

\begin{pro} \label{sdimj} $\beta_S(J_{n,k}) = \binom{n-1}{k}$ \end{pro}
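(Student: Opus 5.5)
By Theorem \ref{gsr1}, it suffices to compute $vc((J_{n,k})_{SR})$. Lemma \ref{sdimjl} identifies the edges of $(J_{n,k})_{SR}$: two $k$-subsets are joined exactly when $d(A,B)=k$. By Property \ref{distj} this means $|A\cap B|=0$. Hence $(J_{n,k})_{SR}$ is precisely the Kneser graph $K_{n,k}$, on the same vertex set of all $k$-subsets of $[n]$ (recall we assume $n\ge 2k$).

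We then apply Proposition \ref{vc1a}, which requires that $K_{n,k}$ have no isolated vertices. For $n\ge 2k$, every $k$-subset $A$ has at least one disjoint $k$-subset, namely a $k$-subset of $[n]\setminus A$, which has at least $k$ elements. So the hypothesis holds, and $vc(K_{n,k})=\binom{n}{k}-ind(K_{n,k})$.

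The Erd\H{o}s--Ko--Rado theorem (Theorem \ref{erd}) gives $ind(K_{n,k})=\binom{n-1}{k-1}$ for $n\ge 2k$. Combining the steps,
$$\beta_S(J_{n,k})=vc((J_{n,k})_{SR})=\binom{n}{k}-\binom{n-1}{k-1}=\binom{n-1}{k},$$
by Pascal's identity.

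The main point needing care is the case $n=2k$. There $K_{2k,k}$ is a perfect matching ($A$ is joined only to $\overline{A}$), and it is disconnected, which is harmless since Proposition \ref{vc1a} needs only the absence of isolated vertices. Theorem \ref{erd} still gives $\binom{2k-1}{k-1}=\frac12\binom{2k}{k}$, which is consistent: one vertex from each matched pair is an independent set. Connectivity of $J_{n,k}$ itself, needed for Theorem \ref{gsr1}, is immediate. Beyond this, the argument is a direct chaining of the cited results, with Lemma \ref{sdimjl} as the key structural input.
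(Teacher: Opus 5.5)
Your proposal is correct and follows the paper's proof essentially step for step: you identify $(J_{n,k})_{SR}=K_{n,k}$ via Lemma~\ref{sdimjl} and Property~\ref{distj}, then chain Theorem~\ref{gsr1}, Proposition~\ref{vc1a} and Theorem~\ref{erd} to get $\binom{n}{k}-\binom{n-1}{k-1}=\binom{n-1}{k}$. You also explicitly check that $K_{n,k}$ has no isolated vertices for $n\ge 2k$, including the perfect-matching case $n=2k$; this is a hypothesis of Proposition~\ref{vc1a} that the paper uses without stating.
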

\begin{proof}
From Property \ref{distj} it follows that $Diam(J_{n,k}) = k$. 
Next, by Lemma \ref{sdimjl}, vertices $A,B \in V(J_{n,k})$ are mutually maximally distant if and only if
 $d(A,B) = k \Leftrightarrow A \bigcap B = \emptyset \Leftrightarrow AB \in E(K_{n,k})$.
Therefore, by Definition \ref{gsr1a} and Definition \ref{gsr1b}, we have $(J_{n,k})_{SR} = K_{n,k}$.

Finally, by Theorem \ref{erd}, Proposition \ref{vc1a} and Theorem \ref{gsr1} 
it follows $\beta_S(J_{n,k})$ = $vc((J_{n,k})_{SR})$ =  $vc(K_{n,k})$ =
$|K_{n,k}| - ind(K_{n,k})$ = $\binom{n}{k} - \binom{n-1}{k-1}$ =
$\binom{n-1}{k}$.
\end{proof}

It should be noted that Proposition \ref{sdimj} alternatively could be proved by 
using recently proposed outer general position number (\cite{tian25}).
Definition and properties of all four general position numbers are out of the scope of this paper
and can be found in \cite{gpp26}.

The following theorem gives the exact value of the strong metric dimension of
$K_{n,k}$ for $n \ge 3k-1$.

\begin{thm} \label{k3s} For $k \ge 2$ and $n \ge 3k-1$ it holds $\beta_S(K_{n,k}) = \binom{n}{k} - \lfloor \frac{n}{k} \rfloor$.
\end{thm}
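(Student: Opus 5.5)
The plan mirrors the proof of Proposition \ref{sdimj}, using the identification of the strong resolving graph already noted after (\ref{diam2}). For $n \ge 3k-1$ we have $Diam(K_{n,k}) = 2$, so $(K_{n,k})_{SR} = \overline{K_{n,k}}$. Indeed, in a graph of diameter 2 two distinct vertices at distance 2 are trivially mutually maximally distant. Two adjacent vertices $A,B$ are not: $A$ has $\binom{n-k}{k} \ge 2$ neighbours when $n \ge 3k-1$, $k\ge 2$, and any neighbour $C \ne B$ of $A$ satisfies $C \cap A = \emptyset$ while $C \ne B$. Then $d(B,C)=2$ unless $C \cap B = \emptyset$; we just need one neighbour $C$ of $A$ with $C \cap B \neq \emptyset$, which exists since $|[n]\setminus A| = n-k \ge 2k-1 > k$. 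Hence, by Theorem \ref{gsr1} and Proposition \ref{vc1a} (note that $\overline{K_{n,k}}$ has no isolated vertices, since each $A$ meets some other $k$-set),
$$\beta_S(K_{n,k}) = vc(\overline{K_{n,k}}) = \binom{n}{k} - ind(\overline{K_{n,k}}) = \binom{n}{k} - \omega(K_{n,k}).$$

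It therefore remains to show $\omega(K_{n,k}) = \lfloor n/k \rfloor$. A clique in $K_{n,k}$ is a family of pairwise disjoint $k$-subsets of $[n]$. If the clique has $t$ members, their union has $tk \le n$ elements, so $t \le \lfloor n/k \rfloor$. Conversely, the sets $\{ik+1,\dots,(i+1)k\}$ for $i = 0,\dots,\lfloor n/k\rfloor - 1$ are pairwise disjoint $k$-subsets of $[n]$ and form a clique of that size.

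Substituting gives $\beta_S(K_{n,k}) = \binom{n}{k} - \lfloor \frac{n}{k} \rfloor$. Connectivity of $K_{n,k}$, needed to apply Theorem \ref{gsr1}, holds because $n \ge 3k-1 > 2k$ for $k \ge 2$.

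The only delicate point is verifying $(K_{n,k})_{SR} = \overline{K_{n,k}}$ carefully, in particular that adjacent vertices are never MMD. The paper states this identification after (\ref{diam2}), so I would rely on it and add the short check above. The clique count is elementary.
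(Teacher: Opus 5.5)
Your proof is correct, but it takes a different route from the paper's. You work through the strong resolving graph. First you verify $(K_{n,k})_{SR}=\overline{K_{n,k}}$, including the check, which the paper only asserts after (\ref{diam2}), that disjoint $A,B$ are not mutually maximally distant. Then Theorem~\ref{gsr1} and Proposition~\ref{vc1a} reduce everything to $\omega(K_{n,k})=\lfloor n/k\rfloor$, in exactly the spirit of Proposition~\ref{sdimj}.

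The paper's proof of this theorem never invokes Theorem~\ref{gsr1}; it proves the two inequalities by hand. For the lower bound it uses pigeonhole: any $\lfloor n/k\rfloor+1$ vertices outside a strong resolving set contain two intersecting sets. These are at distance $2=Diam(K_{n,k})$, so only they themselves can strongly resolve the pair. For the upper bound it checks directly that the complement of $\{A_0,\dots,A_{m-1}\}$ is strongly resolving. Its witness is $C=(B\setminus\{jk+1\})\cup\{l\}$, which is precisely the neighbour of $A$ at distance $2$ from $B$ that you use to show $A$ and $B$ are not MMD.

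So both arguments rest on the same two facts: the clique number of $K_{n,k}$, and that one witness. Your version is shorter, matches the Johnson-graph argument, and fills in the MMD verification the paper glosses over. It does depend on the cited Theorem~\ref{gsr1}. The paper's version is self-contained and elementary, and it writes down the strong metric basis explicitly. Yours produces the same basis implicitly, as the complement of a maximum clique of $K_{n,k}$.
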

\begin{proof}
Step 1. $\beta_S(K_{n,k}) \ge \binom{n}{k} - \lfloor \frac{n}{k} \rfloor$ \\
Suppose the contrary, i.e. $\beta_S(K_{n,k}) < \binom{n}{k} - \lfloor \frac{n}{k} \rfloor$.
Let $m = \lfloor \frac{n}{k} \rfloor$ and $S$ be a strong resolving base of $K_{n,k}$, so $|S| = \beta_S(K_{n,k}) < \binom{n}{k} - m$.
Let $A,B \in V(K_{n,k}) \setminus S$. Since $|V(K_{n,k}) \setminus S| \ge m+1$ then $(m+1) \cdot k > n$.
There exist at least $m+1$ vertices (sets) which are not in $S$, composed of at least $(m+1) \cdot k$ indices.
Since $\{1,2,...,n\}$ has $n$ indices, and $(m+1) \cdot k > n$ then there exists $A,B \notin S$ such
that $A \bigcap B \ne \emptyset$  which implies $AB \in V(\overline{K_{n,k}})$. Since $S$ is a
strong resolving set of $K_{n,k}$, and $A,B \notin S$ then 
$$(\exists C \in S) (d(C,A)+d(A,B)=d(C,B) \vee d(C,B)+d(B,A)=d(C,A))$$

Since $d(A,B)=2$ and $n \ge 3k-1$ according to (\ref{diam2}), $Diam(K_{n,k}) = 2 = Diam(\overline{K_{n,k}})$.
Then $C=A$ or $C=B$,
which is a contradiction with $A,B \notin S$!

Step 2. $\beta_S(K_{n,k}) \le \binom{n}{k} - \lfloor \frac{n}{k} \rfloor$ \\
Let $A_i = \{ik+1, ik+2, ...., (i+1)k\}$, for all $0 \le i \le m-1$,
let $S = V(K_{n,k}) \setminus \bigcup\limits_{i = 0}^{m - 1} {{A_i}}$.
For $A,B \notin S, A \ne B$ then $A=A_i$ and $B=A_j$, for some $i$ and $j$, such that $i \ne j$.
Since $A \bigcap B = A_i \bigcap A_j = \emptyset$ then $d(A,B) = 1$.
From $n \ge 2k+1$ it follows that there exists $l \notin A \bigcup B$.
Let $C = (B \setminus \{jk+1\}) \bigcup \{l\}$ then $|C|=k$, $B \bigcap C \ne \emptyset$
and $A \bigcap C = \emptyset$. Consequently, $d(B,C) = d(A,B) + d(A,C)$,
since $d(B,C)=2$ and $d(A,B) = d(A,C) = 1$.
It is obvious that $C \ne A_i$ for $0 \le i \le m-1$ and $|C|=k$ so
$C \in S$. Since for each pair $A,B \notin S, A \ne B$ there exists $C \in S$
such that $d(B,C) = d(A,B) + d(A,C)$, so set $S$ is a strongly resolving 
set of $K_{n,k}$. Since $|S|= \binom{n}{k} - m$ then $\beta_S(K_{n,k}) \le \binom{n}{k} - \lfloor \frac{n}{k} \rfloor$. 
\end{proof}

It should be noted that cases when $k \ge 3$ and $2k+1 \le n \le 3k-2$, are not covered by Theorem \ref{k3s},
since $Diam(K_{n,k}) \ge 3$. Table \ref{gurk} presents the values of $\beta_S(K_{n,k})$ for $3 \le k \le 5$
and $2k+1 \le n \le 3k-2$ obtained by Gurobi 13.0 using the ILP formulation (\ref{ilpsk})-(\ref{ilpsk2}).
Computations were performed on AMD Ryzen 5 5600H computer with 6 cores (12 logical processors) at 3.4 GHz and 8 GB of RAM. 
The same computer was also used for computations in Observation 1 and Observation 2.

\begin{table}
\caption{Values $\beta_S(K_{n,k})$, $3 \le k \le 5$ and $2k+1 \le n \le 3k-2$} \label{gurk}
 \small
\begin{center} 
\begin{tabular}{|c|c|c|c|c|c|}
\hline
$n$ & $k$ & $|V|$ & $|E|$ & $\beta_S(K_{n,k})$ & t (sec)\\
 \hline
7  & 3  & 35  & 70   & 30 & $ < 0.01$  \\
 \hline                                                  
9  & 4  & 126  & 315  & 115 & $ < 0.01$  \\
10 & 4  & 210  & 1575 & 182 & 1  \\
 \hline
11  & 5  & 462  & 1386  & 425 & 262  \\
12  & 5  & 792  & 8316  & 756 & 27692  \\
13  & 5  & 1287 & 36036 & $\le 1122$ & 28735  \\
 \hline
\end{tabular}
\end{center}
\end{table}

\section{The doubly metric dimension of $J_{n,2}$ and $K_{n,2}$}

The next theorem obtains the exact value of the doubly metric dimension of $J_{n,2}$.

\begin{thm} \label{j2} For $n \ge 15$ it holds  $\psi(J_{n,2}) = \lceil \frac{2n}{3} \rceil$.
\end{thm}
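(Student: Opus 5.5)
The lower bound is immediate. Every doubly resolving set is resolving: if $x,y$ doubly resolve $u,v$, then $d(u,x)\ne d(v,x)$ or $d(u,y)\ne d(v,y)$. Hence $\psi(J_{n,2}) \ge \beta(J_{n,2}) = \lceil \frac{2n}{3} \rceil$ by Theorem \ref{mjk}, which applies since $n \ge 15 \ge 6$.

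For the upper bound I would reduce double resolvability to resolvability plus a simple structural condition. Write $r(U) = (d(U,X))_{X\in D}$ for the distance vector of a vertex $U$. The set $D$ is doubly resolving exactly when, for all $U\ne V$, the vector $r(U)-r(V)$ is not constant. The constant $0$ is excluded precisely when $D$ is resolving. Distances in $J_{n,2}$ lie in $\{0,1,2\}$, and a vertex has distance $0$ to at most one element of $D$. So if $|D|\ge 3$ and $r(U)-r(V)$ is the constant $c$, then $|c|\ge 2$ is impossible, and the remaining cases $c=\pm 1$ force, say, $d(V,X)\le 1$ for all $X\in D$. Now view $D$ as the edge set of a graph $H$ on $[n]$. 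Then $d(V,X)\le 1$ means the pair $V$ meets the pair $X$. Hence the condition says that the two elements of $V$ form a vertex cover of $H$.

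Two elements of $[n]$ cannot cover the edges of $H$ if $H$ has at least three connected components that contain edges. Therefore it suffices to find a resolving set $D$ with $|D| = \lceil \frac{2n}{3}\rceil$ whose graph $H$ has at least three nontrivial components. For the construction I would partition $[n]$ into triples, with one or two leftover elements according to $n \bmod 3$. On each triple $\{a,b,c\}$ I would put the path $\{a,b\},\{b,c\}$. The leftover elements I would handle by a small modification, for instance one extra edge attached to a leftover element, so that the total edge count is $\lceil \frac{2n}{3}\rceil$; this should mirror the metric basis construction behind Theorem \ref{mjk}. Since $n\ge 15$, there are at least five triples, and hence at least three nontrivial components.

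The main obstacle is verifying directly that this $H$ is resolving, case by case on $n \bmod 3$. A pair $\{p,q\}$ is determined by which edges of $H$ contain it (distance $0$) and which edges meet it (distance $1$). I would reconstruct $p$ and $q$ from these data. Each element $x$ of a triple component has a distinct incidence pattern with its path. The only element with an empty pattern is the vertex outside all edges, and there is at most one such vertex. From this one checks that distinct pairs yield distinct vectors, with special care for pairs involving the leftover elements. If the leftover cases fail for this particular choice, I would adjust the attachment of the leftover edges until uniqueness holds, keeping at least three components.
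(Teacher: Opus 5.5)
Your proof is correct. Its skeleton matches the paper's: the same lower bound via $\psi\ge\beta$ and Theorem~\ref{mjk}, and essentially the same set $S$. That set consists of paths $\{3i-2,3i-1\},\{3i-1,3i\}$ on consecutive triples, with the leftover elements joined to the middle vertex $3t-1$ of the last triple; the paper simply imports its resolvability from the proof of Theorem 3.32 in \cite{metdj1}.

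The two arguments differ in the passage from resolving to doubly resolving. Given $A\ne B$, the paper uses $t\ge 5$ to find a triple $P_{i_y}$ untouched by $A\cup B$. It takes $Y=\{3i_y-2,3i_y-1\}\in S$, so $d(A,Y)=d(B,Y)=2$. In your notation, $r(A)-r(B)$ then has a zero entry, and by resolvability also a nonzero one. You instead show that a constant difference vector could only be $\pm1$, which would force one pair to meet every member of $D$. That is impossible once $H$ has three nontrivial components.

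The paper's step is a one-line pigeonhole. Yours needs only one element of $D$ avoiding a single pair, rather than avoiding four points. So with the same $S$ it already proves the statement for $n\ge 9$ ($t\ge 3$), covering part of what Observation~\ref{j2o} obtains by computation. Pushed slightly further, it also shows that for $n\ge 12$ every metric basis of $J_{n,2}$ is doubly resolving. A resolving $D$ leaves at most one point of $[n]$ isolated in $H$. So if two points covered $H$, then $|D|\ge n-3>\lceil 2n/3\rceil$.

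If you verify resolvability by hand instead of citing it, your remark about an empty incidence pattern is misleading. Write $\chi_x$ for the incidence vector of $x$ with respect to $D$. On a path $a_ib_ic_i$ you have $\chi_{a_i}+\chi_{c_i}=\chi_{b_i}$. A point $z$ isolated in $H$ would therefore make $\{a_i,c_i\}$ and $\{b_i,z\}$ indistinguishable; this ambiguity disappears only because no point is isolated. Attaching a leftover at an end of a path (creating a $P_4$), or as a separate $K_2$ component, also breaks resolvability. That is why the leftovers are attached to the middle vertex.
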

\begin{proof} By Theorem \ref{mjk} it holds $\beta(J_{n,2}) = \lceil \frac{2n}{3} \rceil$.
Having in mind that for every graph $G$, $\psi(G) \ge \beta(G)$ so $\psi(J_{n,2}) \ge \lceil \frac{2n}{3} \rceil$.
Let $S$ be a resolving set construncted in the proof of Theorem 3.32 in \cite{metdj1} (Theorem \ref{mjk} in this paper). 
We will prove that set $S$ is also a doubly resolving set.

The construction is the following.

Let $S'= \{ \{3i-2,3i-1\},\{3i-1,3i\} | 1 \le i \le t\}$, where $t = \lfloor \frac{n}{3} \rfloor$.\\

Now 

\begin{equation} \label{dbas}
S= \begin{cases} S' , \quad n=3t \\ 
S' \bigcup \{  \{3t-1,3t+1\} \},  \quad n=3t+1 \\
S' \bigcup \{  \{3t-1,3t+1\},   \{3t-1,3t+2\} \}, \quad n=3t+2 \\
\end{cases}
\end{equation} . \\

Let $P=(P_1,...,P_t)$ be a partition of indices \{1,2,...,n\}, 
such that \\ $P_i = \begin{cases} \{3i-2,3i-1,3i\}, \quad  i<t \\ P \setminus \bigcup\limits_{j = 1}^{t - 1} {{P_j}} ,  \quad i=t   \end{cases}$.\\

In Theorem 3.32 in \cite{metdj1} it is proved that $S$ is a resolving set, i.e. $(\forall A,B \in V(J_{n,2})) (A \ne B) (\exists X \in S)$
 $d(A,X) \ne d(B,X)$.

Let us prove that $(\forall A,B \in V(J_{n,2})) (A \ne B) (\exists Y \in S) \quad d(A,Y) = d(B,Y)$.
Let $A = \{a,b\}$ and $B=\{c,d\}$, such that $a \in P_{i_a}$, $b \in P_{i_b}$, $c \in P_{i_c}$ and
$d \in P_{i_d}$. It should be noted that some indices $i_a,i_b,i_c,i_d$ can be equal.
Therefore, set $I = \{i_a,i_b,i_c,i_d\}$ has cardinality $|I| \le 4$.
Since $n \ge 15$ then $t \ge 5$, and there exists $i_y$ such that $1 \le i_y \le t$ and $i_y \notin \{i_a,i_b,i_c,i_d\}$.
For $Y = \{3 i_y -2, 3 i_y-1\}$ it holds that $A \bigcap Y = B \bigcap Y = \emptyset$ implying $d(A,Y)=d(B,Y)=2$. \\

Since  $d(A,X) \ne d(B,X)$ and $d(A,Y)=d(B,Y)$ it follows $d(A,X) - d(A,Y) \ne d(B,X) - d(B,Y)$,
which implies that set $S$ is a dobly resolving set, and $\psi(J_{n,2}) \le \lceil \frac{2n}{3} \rceil$.
\end{proof} 

\begin{obs} \label{j2o} For $4 \le n \le 14$ it holds  $\psi(J_{n,2}) = \lceil \frac{2n}{3} \rceil$.
This result is obtained by Gurobi 13.0 using the ILP formulation (\ref{ilpdk})-(\ref{ilpdk2}).
The Gurobi running time grows rapidly with the dimension,
and for $n=14$ it is 5530 seconds. It should be noted that the doubly metric basis (\ref{dbas})
from Theorem \ref{j2} is also a doubly metric basis of $J_{n,2}$ for $4 \le n \le 14$.
\end{obs}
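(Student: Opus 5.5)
The statement is an Observation established computationally, so the plan is to organize a verification rather than a pure proof. It has two halves, a lower bound and an upper bound, and they are handled differently.

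\emph{Lower bound.} For every graph, $\psi(G)\ge\beta(G)$. For $6\le n\le 14$, Theorem \ref{mjk} therefore gives $\psi(J_{n,2})\ge\lceil\frac{2n}{3}\rceil$ at once. For $n=4,5$, Theorem \ref{mjk} does not apply. Here I would either solve the ILP (\ref{ilpdk})-(\ref{ilpdk2}) to optimality, or check by hand that no set of size $\lceil 2n/3\rceil-1$ (size $2$ and size $3$, respectively) is even resolving. Each such check is a finite enumeration over at most $\binom{10}{3}$ sets.

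\emph{Upper bound.} Here I would exhibit the set $S$ from (\ref{dbas}) and verify that it is doubly resolving; note that $|S|=\lceil 2n/3\rceil$. The argument in the proof of Theorem \ref{j2} splits the job into two parts. First, $S$ is resolving. Second, for every pair $A\ne B$ there exists $Y\in S$ with $d(A,Y)=d(B,Y)$; given this, if $X$ resolves $A,B$, then $X,Y$ doubly resolve them. The first part comes from \cite{metdj1} for $n\ge 6$, and for small $n$ it is checked directly. The second part is where $n\ge15$ was used, to guarantee a block $P_{i_y}$ disjoint from $A\cup B$.

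For $n<15$ I would strengthen this second part by hand as far as possible. A block need not be untouched; it suffices that $|A\cap Y|=|B\cap Y|$ for some $Y$. Moreover, when $A\cap B\neq\emptyset$ or when the two pairs share a block, one can pick $Y$ meeting both sets symmetrically, e.g.\ $Y$ containing a common element. The pairs whose four elements hit four distinct blocks cannot be handled this way when $t\le 4$. For those pairs I would abandon the ``equidistant $Y$'' shortcut and check the double-resolution condition directly over all pairs $X,Y\in S$.

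\emph{Main obstacle and resolution.} The main obstacle is this residual case analysis for $9\le n\le 14$, where $t=3$ or $4$. I would not try to finish it by hand. Instead, I would settle it by computing, for each $n$ with $4\le n\le 14$, the difference vectors $(d(A,x)-d(A,s_1))_{x\in S}$ for all $A$, where $s_1$ is a fixed element of $S$, and checking that these vectors are pairwise distinct. That is equivalent to $S$ being doubly resolving and takes negligible time. The optimality of $\lceil 2n/3\rceil$ is then confirmed independently by Gurobi on the ILP, which is what the Observation reports.
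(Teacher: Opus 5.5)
Your plan follows the paper's route: a finite verification, with Gurobi certifying optimality. Splitting it into two parts is sound and cheaper: for $6\le n\le 14$ you get the lower bound from $\psi\ge\beta$ and Theorem~\ref{mjk}, and the upper bound from a direct check of the set (\ref{dbas}).

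\textbf{The problem is at $n=5$.} One of your two options there would fail. You cannot show that no $3$-subset of $V(J_{5,2})$ is resolving, because $\{\{1,2\},\{1,3\},\{1,4\}\}$ is resolving. The ten vertices get the pairwise distinct distance vectors $(0,1,1)$, $(1,0,1)$, $(1,1,0)$, $(1,1,1)$, $(1,1,2)$, $(1,2,1)$, $(1,2,2)$, $(2,1,1)$, $(2,1,2)$, $(2,2,1)$. So $\beta(J_{5,2})=3<4=\psi(J_{5,2})$, and $n=5$ is exactly a case where the lower bound is \emph{not} inherited from the metric dimension. This set fails only as a doubly resolving set: $\{1,2\}$ and $\{2,5\}$ have vectors $(0,1,1)$ and $(1,2,2)$, which differ by a constant.

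Your ILP option still works at $n=5$, but any hand check there must be about double resolution. A short one:
\begin{itemize}
\item Suppose $D=\{x,y,z\}$ were doubly resolving. The seven vertices outside $D$ would then get distinct vectors in $\{1,2\}^3$, and not both $(1,1,1)$ and $(2,2,2)$, so all six mixed vectors occur.
\item Hence each $v\in D$ has some $w\notin D$ that meets $v$ but neither other member of $D$.
\item Then $v$ cannot meet both other members of $D$, since the vectors of $v$ and $w$ would differ by $1$.
\item So at most one pair in $D$ intersects. In $[5]$ this forces $D=\{\{a,b\},\{a,c\},\{d,e\}\}$, and then $\{a,d\}$ and $\{a,e\}$ both get $(1,1,1)$, a contradiction.
\end{itemize}

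\textbf{The ``main obstacle'' for $9\le n\le 14$ is not really there.} In a graph of diameter $2$, a resolving set $S$ with $|S|\ge 2$ fails to doubly resolve $A\ne B$ only if, after swapping, $d(B,X)=d(A,X)+1$ for every $X\in S$. For the set (\ref{dbas}) with $n\ge 6$ this cannot happen:
\begin{itemize}
\item If $A\notin S$, it forces $B\cap X=\emptyset$ for all $X\in S$, contradicting $\bigcup S=[n]$.
\item If $A\in S$, it forces $A$ to meet every other member of $S$. But each member of $S$ lies inside a single block $P_i$, and each of the $t\ge 2$ blocks contains the member $\{3i-2,3i-1\}$.
\end{itemize}
Hence (\ref{dbas}) is doubly resolving whenever it is resolving. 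Combined with \cite{metdj1} and Theorem~\ref{mjk}, this gives $\psi(J_{n,2})=\lceil 2n/3\rceil$ for every $n\ge 6$ without computation, so Theorem~\ref{j2} does not need $n\ge 15$. Only $n=4$ and $n=5$ then need a direct check.
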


The next Corollary combines Theorem \ref{j2} and Observation \ref{j2o}:  

\begin{cor} \label{j2b} For $n \ge 4$ it holds  $\psi(J_{n,2}) = \lceil \frac{2n}{3} \rceil$.
\end{cor}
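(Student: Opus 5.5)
The corollary is a case split on $n$. For $n \ge 15$ I would invoke Theorem \ref{j2}, which gives $\psi(J_{n,2}) = \lceil \frac{2n}{3} \rceil$ directly. For $4 \le n \le 14$ I would invoke Observation \ref{j2o}, which records the same value as obtained by the exact ILP formulation (\ref{ilpdk})-(\ref{ilpdk2}) solved by Gurobi. These two ranges cover every $n \ge 4$, so the formula holds throughout.

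To make the corollary less dependent on the computation, I would also give a human-checkable argument for as much of the small range as possible.

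\emph{Lower bound.} Every doubly resolving set is resolving, since a pair $x,y$ with $d(u,x)-d(u,y) \ne d(v,x)-d(v,y)$ forces $d(u,x)\ne d(v,x)$ or $d(u,y)\ne d(v,y)$. Hence $\psi \ge \beta$. Theorem \ref{mjk} then gives $\psi(J_{n,2}) \ge \lceil \frac{2n}{3} \rceil$ for $n \ge 6$. For $n=4,5$ the lower bound must rest on the computation.

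\emph{Upper bound.} I would reuse the set $S$ from (\ref{dbas}). The argument of Theorem \ref{j2} needs, for each pair $A \ne B$, an element $Y \in S$ disjoint from $A \cup B$, so that $d(A,Y)=d(B,Y)=2$.
\begin{itemize}
\item For $t \ge 5$ this is automatic, because $A \cup B$ meets at most four parts $P_i$.
\item For smaller $t$, a part $P_i$ avoided by $A \cup B$ exists whenever $A \cup B$ meets at most $t-1$ parts.
\item The remaining pairs are those for which $A \cup B$ meets every part. For these I would look for some $Y \in S$ disjoint from $A \cup B$, or else exhibit directly a pair $X,Y \in S$ that doubly resolves $A,B$.
\end{itemize}

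The main obstacle is exactly this case analysis for small $n$, where $t \le 4$ and $|A\cup B|\le 4$ can hit every part. I would not attempt it by hand. Instead I would rely on Observation \ref{j2o}, which states that the basis (\ref{dbas}) remains doubly resolving and optimal for $4\le n\le 14$, as the certified finite verification.
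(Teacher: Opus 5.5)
Your proposal is correct and matches the paper exactly: the corollary is obtained by combining Theorem \ref{j2} for $n \ge 15$ with the computational Observation \ref{j2o} for $4 \le n \le 14$. Your extra hand argument ($\psi \ge \beta$ for $n \ge 6$, plus a part $P_i$ avoided by $A \cup B$) is sound but not needed, and you are right that for $n=5$ the lower bound must come from the computation, since $\beta(J_{5,2})=3<4$.
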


The next theorem obtains the exact value of the doubly metric dimension of $K_{n,2}$.

\begin{thm} \label{k2} For $n \ge 15$ it holds  $\psi(K_{n,2}) = \lceil \frac{2n}{3} \rceil$.
\end{thm}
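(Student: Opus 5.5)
The plan mirrors the proof of Theorem \ref{j2}, using the relation $d_{K_{n,2}}(A,B) = 3 - d_{J_{n,2}}(A,B)$ for $A \ne B$.

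\textbf{Lower bound.} Theorem \ref{mjk} gives $\beta(K_{n,2}) = \lceil \frac{2n}{3} \rceil$. Every doubly resolving set is resolving, so $\psi(K_{n,2}) \ge \beta(K_{n,2}) = \lceil \frac{2n}{3} \rceil$.

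\textbf{Upper bound.} I will show that the set $S$ from (\ref{dbas}) is also doubly resolving for $K_{n,2}$. First I check that $S$ resolves $K_{n,2}$. Take $A \ne B$. By Theorem \ref{j2}'s argument there is $X \in S$ with $d_{J}(A,X) \ne d_{J}(B,X)$.
\begin{itemize}
\item If neither $A$ nor $B$ equals $X$, then both distances transform by $d \mapsto 3-d$, so they still differ in $K_{n,2}$.
\item If, say, $A = X$, then $d_K(A,X)=0$ while $d_K(B,X)\ge 1$, so they differ.
\end{itemize}
In fact, for every vertex $Z$ we have $d_K(A,Z)=d_K(B,Z)$ iff $d_J(A,Z)=d_J(B,Z)$. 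This holds because the map $0\mapsto0$, $1\mapsto2$, $2\mapsto1$ is a bijection on distances. So resolving sets coincide for the two graphs, and $S$ resolves $K_{n,2}$ with the same witness $X$.

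Next I need, for every pair $A \ne B$, some $Y \in S$ with $d_K(A,Y)=d_K(B,Y)$. Write $A=\{a,b\}$, $B=\{c,d\}$ and use the partition $P$ as in Theorem \ref{j2}. Since $n \ge 15$ gives $t \ge 5$, there is a block index $i_y \notin \{i_a,i_b,i_c,i_d\}$. Take $Y=\{3i_y-2,3i_y-1\} \in S'$. Then $Y$ is disjoint from both $A$ and $B$, so $d_K(A,Y)=d_K(B,Y)=1$.

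Combining the two gives $d_K(A,X)-d_K(A,Y) \ne d_K(B,X)-d_K(B,Y)$. Hence $S$ is doubly resolving, and $\psi(K_{n,2}) \le |S| = \lceil \frac{2n}{3} \rceil$.

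The only delicate step is the claim that $S$ resolves $K_{n,2}$, specifically the case where $X$ coincides with $A$ or $B$. The bijection on distance values $\{0,1,2\}$ handles it cleanly, so no real obstacle is expected. One minor check is that $Y$ lies in $S'$, i.e., that block $i_y$ is a full triple or at least contains $3i_y-2$ and $3i_y-1$; this holds for every $i_y \le t$.
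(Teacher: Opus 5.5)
Your proposal is correct and follows essentially the same route as the paper. The lower bound comes from $\psi \ge \beta = \lceil \frac{2n}{3} \rceil$. The upper bound reuses the set $S$ of (\ref{dbas}): the $J_{n,2}$-resolving witness is transferred via $d_{K_{n,2}} = 3 - d_{J_{n,2}}$, with the same special case when the witness equals $A$ or $B$, and the vertex $\{3i_y-2,3i_y-1\}$ from an untouched block is at distance $1$ from both $A$ and $B$. Your bijection $0\mapsto 0$, $1\mapsto 2$, $2\mapsto 1$ is just a compact restatement of the paper's case analysis.
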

\begin{proof} By Theorem \ref{mjk} from \cite{metdj1} it holds $\beta(K_{n,2}) = \lceil \frac{2n}{3} \rceil$.
Having in mind that for every graph $G$, $\psi(G) \ge \beta(G)$, so $\psi(K_{n,2}) \ge \lceil \frac{2n}{3} \rceil$.

Let $S$ be the set defined by (\ref{dbas}) from Theorem \ref{j2}.
As already noticed $\overline{J_{n,2}} = K_{n,2}$ and $Diam(J_{n,2}) = Diam(K_{n,2}) = 2$, so $V(J_{n,2}) = V(K_{n,2})$
and for each two vertices $A,B \in V(K_{n,2})$ with $A \ne B$ it holds $d(A,B) = 3 - d_{J_{n,2}}(A,B)$,
where $d(A,B)$ and  $d_{J_{n,2}}(A,B)$ are distances between $A$ and $B$ in Kneser graph $K_{n,2}$ and
Johnson graph $J_{n,2}$, respectively. 

In Theorem \ref{j2} it is proved $$(\forall A,B \in V(J_{n,2}) = V(K_{n,2})) (\exists X \in S) \quad A \bigcap X = B \bigcap X = \emptyset$$
This implies $d(A,X)=d(B,X)=1$.\\
In Theorem \ref{j2} it is proved $$(\forall A,B \in V(J_{n,2}) = V(K_{n,2}))(A \ne B) (\exists Y \in S) \quad d_{J_{n,2}}(A,Y) \ne d_{J_{n,2}}(B,Y)$$
If $A \ne Y$ and $B \ne Y$, from $d(A,Y) = 3 - d_{J_{n,2}}(A,Y)$ and $d(B,Y) = 3 - d_{J_{n,2}}(B,Y)$ it follows $d(A,Y) \ne d(B,Y)$.

In the case when $A = Y$ then $0 = d(A,Y) \ne d(B,Y) \ge 1$. Similarly, if $B = Y$ then $0 = d(B,Y) \ne d(A,Y) \ge 1$

Therefore, $d(A,X)-d(A,Y) \ne d(B,X)-d(B,Y)$ implying that set $S$ is a doubly resolving set for graph $K_{n,2}$, which implies $\psi(K_{n,2}) \le \lceil \frac{2n}{3} \rceil$.  
\end{proof}

\begin{obs} \label{k2a} For $6 \le n \le 14$ it holds  $\psi(K_{n,2}) = \lceil \frac{2n}{3} \rceil$.
This result is obtained by Gurobi 13.0 using the ILP formulation (\ref{ilpdk})-(\ref{ilpdk2}).
The Gurobi running time grows rapidly with the dimension,
and for $n=14$ it is 7223 seconds. It should be noted that the doubly metric basis (\ref{dbas})
from Theorem \ref{j2} is also a doubly metric basis of $K_{n,2}$ for $6 \le n \le 14$.
Finally, $\psi(K_{5,2}) = 3$ with a doubly metric basis $\{ \{1,2\}, \{1,3\}, \{1,4\} \}$
is obtained also by Gurobi 13.0 for 0.06 seconds.
\end{obs}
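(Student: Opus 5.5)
The lower bound I would take from Theorem~\ref{mjk}, and the upper bound from a direct combinatorial check of the set $S$ in (\ref{dbas}). Since $\psi(G)\ge\beta(G)$ for every graph and Theorem~\ref{mjk} gives $\beta(K_{n,2})=\lceil\frac{2n}{3}\rceil$ for $n\ge 6$, we get $\psi(K_{n,2})\ge\lceil\frac{2n}{3}\rceil$ on the whole range $6\le n\le 14$. It then suffices to show that the set $S$ from (\ref{dbas}), which has exactly $\lceil\frac{2n}{3}\rceil$ elements, is doubly resolving in $K_{n,2}$ for these $n$. The proof of Theorem~\ref{k2} does not carry over directly, because it needs $t\ge 5$ to find a part $P_{i_y}$ avoided by both $A$ and $B$.

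I would reformulate double resolvability. A set $S$ fails to doubly resolve $A\ne B$ exactly when the vector $\bigl(d(A,X)-d(B,X)\bigr)_{X\in S}$ is constant, say equal to $c$. We may take $c\ne 0$, since $S$ is already resolving by Theorem~\ref{mjk}. In $K_{n,2}$ all distances lie in $\{0,1,2\}$, with $d=1$ meaning disjoint and $d=2$ meaning intersecting but distinct. Up to swapping $A$ and $B$, this leaves two cases.

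\textbf{Case $c=2$.} This forces $d(B,X)=0$ for all $X\in S$, which is impossible since $|S|\ge 4$.

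\textbf{Case $c=1$.} For every $X\in S$, either $d(A,X)=2$ and $d(B,X)=1$, or $d(A,X)=1$ and $X=B$. In particular, $A$ must intersect every member of $S$ except possibly one.

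The key step is then a covering count. The $2$-set $A=\{a,b\}$ must intersect all but at most one of the pairs $\{3i-2,3i-1\}$ and $\{3i-1,3i\}$, for $1\le i\le t$, together with the extra pairs containing $3t-1$. Each element of $[n]$ lies in at most two pairs of $S'$, except that $3t-1$ may lie in up to four pairs of $S$. So $A$ can meet at most about $6$ members of $S$, while $|S|-1\ge 2t-1$. This rules out $t\ge 4$ at once and leaves only $t=2,3$, i.e.\ $6\le n\le 11$.

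For these values I would describe explicitly the few sets $A$ that meet all (or all but one) members of $S$. Typically $A$ consists of middle elements $3i-1$, possibly together with $3t+1$ or $3t+2$. For each such $A$, I would check that no $B$ is disjoint from all $X\in S$ met by $A$ while still satisfying $d(B,X)=1$ everywhere required. This usually fails because such a $B$ would have to avoid $\{1,\dots,3t\}$ entirely, and only $n-3t\le 2$ indices remain; when $n-3t=2$ the extra pairs of $S$ contain $3t+1$ and $3t+2$, so this also fails.

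I expect the main obstacle to be this finite case analysis for $n\in\{6,7,8\}$ ($t=2$, $|S|\le 6$), where $A=\{2,5\}$ meets all of $S'$ and the extra sets containing $5$. For $n=6$ the set $\{1,\dots,6\}$ leaves $B$ no room, but for $n=7,8$ the extra indices must be inspected by hand. As a safeguard, and as the paper itself does, I would confirm all cases $6\le n\le 14$ by solving the ILP (\ref{ilpdk})--(\ref{ilpdk2}) with Gurobi, which also certifies minimality.

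For $n=5$, where Theorem~\ref{mjk} does not apply, the claimed value $\psi(K_{5,2})=3$ (the Petersen graph) follows from $\beta\ge 3$ together with a direct check that $\{\{1,2\},\{1,3\},\{1,4\}\}$ is doubly resolving.
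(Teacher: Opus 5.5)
Your argument is correct, but it takes a genuinely different route from the paper. The paper establishes this observation purely computationally: it solves the ILP (\ref{ilpdk})--(\ref{ilpdk2}) with Gurobi for each $6\le n\le 14$ and for $n=5$. You instead take the lower bound from $\psi\ge\beta$ and Theorem~\ref{mjk}, and prove the upper bound by hand, showing that (\ref{dbas}) is doubly resolving.

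The computation certifies $\psi$ directly, without relying on the lower bound from \cite{metdj1}. However, it is not human-checkable, is confined to $n\le 14$, and costs hours. Your argument never uses $n\le 14$, so it actually gives $\psi(K_{n,2})=\lceil 2n/3\rceil$ for all $n\ge 6$. That makes both the computer search and the $t\ge 5$ device in Theorem~\ref{k2} unnecessary.

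Your argument can also be shortened considerably. In the case $c=1$ you already know that $B$ must be disjoint from every member of $S\setminus\{B\}$. If $B\notin S$ this is impossible, because the members of $S$ cover $[n]$ (the extra pairs contain $3t+1$ and $3t+2$). If $B\in S$ it is impossible, because every member of $S$ contains some $3i-1$ that also lies in another member of $S$. Hence no failing pair exists, and you can drop the covering count on $A$, the reduction to $t\in\{2,3\}$, and the anticipated hand inspection for $n=7,8$. This also closes the one case your text leaves implicit, namely $B\in S$ with $A$ missing exactly $B$.

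Two minor remarks. First, that $S$ resolves $K_{n,2}$ is not literally Theorem~\ref{mjk}; it follows from the construction in \cite{metdj1} together with $d_{K_{n,2}}=3-d_{J_{n,2}}$, as in the proof of Theorem~\ref{k2}. Second, for $n=5$ your plan works: $\beta\ge 3$ is immediate, since two landmarks yield at most $2+4<10$ distinct vectors. The ten vectors $\bigl(d(A,\{1,3\})-d(A,\{1,2\}),\,d(A,\{1,4\})-d(A,\{1,2\})\bigr)$ are indeed pairwise distinct.
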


The next Corollary combines Theorem \ref{k2} and Observation \ref{k2a}:  

\begin{cor} \label{k2b} For $n \ge 6$ it holds  $\psi(K_{n,2}) = \lceil \frac{2n}{3} \rceil$.
\end{cor}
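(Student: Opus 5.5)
The statement is Corollary \ref{k2b}: for every $n \ge 6$ we have $\psi(K_{n,2}) = \lceil \frac{2n}{3} \rceil$. I would not prove anything new here. The range $n \ge 6$ splits into two parts that are already settled earlier in the paper, and the corollary is just their union.

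The first part is $n \ge 15$, which is exactly Theorem \ref{k2}. Recall how that proof goes. The lower bound $\psi(K_{n,2}) \ge \beta(K_{n,2}) = \lceil \frac{2n}{3} \rceil$ follows from Theorem \ref{mjk}, since every doubly resolving set is resolving. For the upper bound, the set $S$ from (\ref{dbas}) is doubly resolving. Its proof combines two ingredients:
\begin{itemize}
\item a vertex $X \in S$ that is disjoint from both $A$ and $B$, which exists because $t \ge 5$;
\item a vertex $Y \in S$ that resolves $A,B$ in $J_{n,2}$, which then also resolves them in $K_{n,2}$ by the relation $d = 3 - d_{J_{n,2}}$ for distinct vertices, with the case $Y \in \{A,B\}$ handled separately.
\end{itemize}

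The second part is $6 \le n \le 14$, which is Observation \ref{k2a}. There the value $\lceil \frac{2n}{3} \rceil$ was certified computationally by solving the ILP (\ref{ilpdk})--(\ref{ilpdk2}) to optimality. The upper bound is also witnessed by the set (\ref{dbas}), which is doubly resolving in these cases as well.

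So the proof consists of noting that $\{n : n \ge 6\} = \{6,\dots,14\} \cup \{n : n \ge 15\}$, and that in both parts the value $\lceil \frac{2n}{3}\rceil$ has been established.

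The only substantive step is making sure nothing is lost at the boundary, and there is no obstacle. Theorem \ref{k2} needs $n \ge 15$ because it requires $t = \lfloor n/3 \rfloor \ge 5$ in order to find a block $P_{i_y}$ avoiding the at most four blocks meeting $A \cup B$. The computational range covers everything below that threshold down to $n=6$, which is also where Theorem \ref{mjk} begins to apply. The case $n=5$ is excluded from the statement because Observation \ref{k2a} gives $\psi(K_{5,2}) = 3 \ne \lceil 10/3 \rceil = 4$. The resulting proof is therefore one or two sentences, citing Theorem \ref{k2} and Observation \ref{k2a}.
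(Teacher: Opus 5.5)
Your proposal is correct and matches the paper's argument exactly: the corollary is the union of Theorem~\ref{k2} for $n \ge 15$ and the computational Observation~\ref{k2a} for $6 \le n \le 14$. Your side remarks are also accurate: the requirement $t \ge 5$ is what forces $n \ge 15$, and $n=5$ is excluded because $\psi(K_{5,2})=3<\lceil 10/3\rceil$.
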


\section{Conclusions}

The doubly and strong metric dimensions of both Johnson and Kneser graphs have been studied. 
Exact values $\beta_S(J_{n,k}) = \binom{n-1}{k}$ and $\psi(J_{n,2}) = \psi(K_{n,2}) = \lceil \frac{2n}{3} \rceil$
are obtained. Furthermore, it is shown that for $n \ge 3k-1$, the value of $\beta_S(K_{n,k})$ is equal to $\binom{n}{k} - \lfloor \frac{n}{k} \rfloor$.

In future research, it would be interesting to investigate the values of $\beta_S(K_{n,k})$ for $n < 3k-1$,
as well as those of $\psi(J_{n,k})$ and $\psi(K_{n,k})$ for $k \ge 3$.
Furthermore, efforts could be directed towards determining the strong and/or doubly metric dimensions of other interesting classes of graphs. 
 
\bibliographystyle{elsarticle-num}
 \bibliography{paper}

\end{document}